\documentclass{amsart}
\usepackage{stmaryrd,mathabx}

\usepackage[pagebackref,
]{hyperref}  

%
%
 \newtheorem{thm}{Theorem}[section]
 \newtheorem{cor}[thm]{Corollary}
 \newtheorem{lem}[thm]{Lemma}
 \newtheorem{prop}[thm]{Proposition}
 \theoremstyle{definition}
 
 \theoremstyle{remark}

 \numberwithin{equation}{section}

\newcommand{\W}{\mathcal{W}}
\newcommand{\R}{\mathbb{R}}

\newcommand{\n}{\nabla}
\newcommand{\tg}{\widetilde{g}}
\newcommand{\tn}{\widetilde\nabla}
\newcommand{\tR}{\widetilde{R}}
\newcommand{\tF}{\widetilde{F}}
\newcommand{\tS}{\widetilde{S}}
\newcommand{\tQ}{\widetilde{Q}}
\newcommand{\tA}{\widetilde{A}}
\newcommand{\tP}{\widetilde{\Phi}}
\newcommand{\tN}{\widetilde{N}}
\newcommand{\ta}{\theta}
\newcommand{\lm}{\lambda}
\newcommand{\D}{\mathrm{d}}

\newcommand{\norm}[1]{\Vert#1\Vert ^2}
\newcommand{\nJ}{\norm{\nabla J}}
\newcommand{\tnJ}{\norm{\widetilde{\nabla} J}}

\newcommand{\propref}[1]{Proposition~\ref{#1}}
\newcommand{\lemref}[1]{Lemma~\ref{#1}}

\begin{document}
%
%
%
%
%
\title[Invariant Tensors on Almost Complex Norden Manifolds]
 {Invariant Tensors under the Twin Interchange\\
of Norden Metrics
on Almost Complex \\ Manifolds}
\author[M. Manev]{Mancho Manev}

\address{%
  Department of Algebra and Geometry \\
  Faculty of Mathematics and Informatics \\
  University of Plovdiv  \\
  236, Bulgaria Blvd. \\
  Plovdiv 4000, Bulgaria
}

\email{mmanev@uni-plovdiv.bg}

\subjclass{Primary 53C15, 53C50; Secondary 32Q60, 53C55}

\keywords{Invariant tensor, connection, almost complex manifold, Norden metric, indefinite metric}


\begin{abstract}
The object of study are almost complex manifolds with a pair of Norden metrics, mutually associated by means of the almost complex structure. More precisely, a torsion-free connection and tensors with geometric interpretation are found which are invariant under the twin interchange, i.e. the swap of the counterparts of the pair of Norden metrics and the corresponding Levi-Civita connections. A Lie group depending on four real parameters is considered as an example of a 4-dimensional manifold of the studied type. The mentioned invariant objects are found in an explicit form.
\end{abstract}

\maketitle

\section*{Introduction}

Hermitian metrics on almost complex manifolds are well known. Then the almost complex structure $J$ acts as an isometry with respect to the (Riemannian or pseudo-Riemannian) metric. The associated (0,2)-tensor of the Hermitian metric is a 2-form. Other case is when the almost complex structure acts as an anti-isometry regarding a pseudo-Riemannian metric. Such a metric is called a Norden metric. The associated (0,2)-tensor of any Norden metric is also a Norden metric. So, in this case we dispose with a pair of mutually associated Norden metrics, known also as twin Norden metrics. These \emph{almost Norden manifolds} are studied in the latter three decades, in the beginning under the names generalized B-manifolds \cite{GrMeDj}, almost complex manifolds with Norden metric \cite{GaBo} and almost complex manifolds with B-metric \cite{GaGrMi}.

An interesting problem on almost Norden manifolds is the presence of tensors with some geometric interpretation which are invariant under the so-called twin interchange. This is the swap of the counterparts of the pair of Norden metrics and their Levi-Civita connections. Similar results for the considered manifolds in the basic classes $\W_1$ and $\W_3$ are obtained in \cite{MT06th} and \cite{DjDo}, \cite{MekManGri22}, respectively. The aim of the present work is the solving of the problem in the general case.

The present paper is organised as follows. Section~1 contains some preliminaries on the considered type of manifolds. In Section~2 we present the main results on the topic about the invariant objects and their vanishing. In Section~3 we consider an example of the studied manifolds of dimension 4 by means of a construction of an appropriate algebra depending on 4 real parameters. Then we compute the basic components of the invariant objects which are found in the previous section.

\section{Almost complex manifolds with Norden metrics}\label{sec_1}



Let $(M,J,g)$ be a $2n$-dimensional almost complex manifold with
Norden metrics or briefly an \emph{almost Norden manifold}.
This means that $J$ is an almost complex structure and $g$ is
a pseudo-Riemannian metric on $M$ such that
\begin{equation*}\label{str}
J^2x=-x,\quad g(Jx,Jy)=-g(x,y).
\end{equation*}
Here and further, $x$, $y$, $z$, $w$
will stand for arbitrary differentiable vector fields on $M$
or vectors in $T_pM$, $p\in M$.

On $(M,J,g)$, there exists an associated metric $\tg$ of $g$ given by
$\tg(x,y)=g(x,Jy)$. It is also a Norden
metric since $\tg(Jx,Jy)=-\tg(x,y)$ and the manifold $(M,J,\tg)$ is an almost Norden manifold, too. Both metrics are necessarily of neutral signature $(n,n)$. They are also known as \emph{twin Norden metrics} on $M$ because of
\begin{equation}\label{twin}
 \tg(x,y)=g(x,Jy),\quad \tg(x,Jy)=-g(x,y).
\end{equation}

The Levi-Civita connections of $g$ and $\tg$ are denoted by $\n$ and $\tn$, respectively.
The interchange of $\n$ and $\tn$ (and respectively $g$ and $\tg$) we call the \emph{twin interchange}.

The
tensor filed $F$ of type $(0,3)$ on $M$ is defined by
\begin{equation}\label{F}
F(x,y,z)=g\bigl( \left( \nabla_x J \right)y,z\bigr).
\end{equation}
It has the following properties
\begin{equation}\label{F-prop}
F(x,y,z)=F(x,z,y)=F(x,Jy,Jz).
\end{equation}

Let $\{e_i\}$ ($i=1,2,\dots,2n$) be an arbitrary basis of
$T_pM$ at a point $p$ of $M$. The components of the inverse matrix
of $g$ are denoted by $g^{ij}$ with respect to
$\{e_i\}$.

The Lee forms $\ta$ and $\ta^*$ associated with $F$ are defined by
\begin{equation}\label{ta}
\ta(z)=g^{ij}F(e_i,e_j,z),\quad \ta^*(z)=g^{ij}F(e_i,Je_j,z).
\end{equation}
For the 1-form $\ta^*$, using $\tg$, we have the following
\[
\ta^*(z)=g^{ij}F(e_i,Je_j,z)=J^j_kg^{ik}F(e_i,e_j,z)=-\tg^{ij}F(e_i,e_j,z)
\]
and the identity
\begin{equation}\label{ta*taJ}
\ta^*=-\ta\circ J
\end{equation}
holds by means of \eqref{F-prop}, because
\[
\ta^*(z)=g^{ij}F(e_i,Je_j,z)=-g^{ij}F(e_i,e_j,Jz)=-\ta(Jz).
\]

If the Levi-Civita connections of $g$ and $\tg$ are denoted by $\n$ and $\tn$, respectively, then the
following tensor is defined in \cite{GaGrMi} by
\begin{equation}\label{Phi}
    \Phi(x,y)=\tn_x y-\n_x y.
\end{equation}
This tensor is known also as the \emph{potential} of $\tn$ regarding $\n$ because of the formula
\begin{equation}\label{tn=nPhi}
    \tn_x y=\n_x y+\Phi(x,y).
\end{equation}
Since both the connections are torsion-free, $\Phi$ is symmetric, i.e. $\Phi(x,y)=\Phi(y,x)$.
Let the corresponding tensor of type $(0,3)$ with respect to $g$ be defined by
\begin{equation}\label{Phi03}
    \Phi(x,y,z)=g(\Phi(x,y),z).
\end{equation}
     By virtue of properties \eqref{F-prop} the following interrelations between $F$ and $\Phi$ are valid \cite{GaGrMi}
\begin{equation}\label{PhiF}
    \Phi(x,y,z)=\frac{1}{2}\bigl\{F(Jz,x,y)-F(x,y,Jz)-F(y,x,Jz)\bigr\},
\end{equation}
\begin{equation*}\label{FPhi}
    F(x,y,z)=\Phi(x,y,Jz)+\Phi(x,z,Jy).
\end{equation*}

A classification of the considered manifolds with respect to $F$
is given in \cite{GaBo}. All eight classes of almost Norden
manifolds are characterized there by
the properties of $F$. An equivalent classification in terms of $\Phi$ is proposed in \cite{GaGrMi}.
The three basic classes are defined, respectively:
\begin{equation}\label{class}
\begin{array}{l}
\W_1:\; F(x,y,z)=\frac{1}{2n} \bigl\{
g(x,y)\ta(z)+g(x,J y)\ta(J z)\\
\phantom{\W_1:\; F(x,y,z)=\frac{1}{2n}}
    +g(x,z)\ta(y)
    +g(x,J z)\ta(J y)\bigr\};\\
\W_2:\;
F(x,y,J z)+F(y,z,J x)+F(z,x,J y)=0,\quad \ta=0;\\
\W_3:\; F(x,y,z)+F(y,z,z)+F(z,x,y)=0;
\end{array}
\end{equation}
\begin{equation}\label{class2}
\begin{array}{l}
\W_1:\; \Phi(x,y,z)=\frac{1}{2n}\left\{g(x,y)f(z)+g(x,Jy)f(Jz)\right\};\\
\W_2:\;
\Phi(x,y,z)=-\Phi(Jx,Jy,z),\quad f=0;\\
\W_3:\; \Phi(x,y,z)=\Phi(Jx,Jy,z).
\end{array}
\end{equation}
The special class $\W_0$ of the K\"ahler manifolds with Norden
metrics (known also as \emph{K\"ahler-Norden manifolds})
belong to any other class. These manifolds are determined by the condition
$F=0$ and $\Phi=0$, respectively.                                                             

The square norm
$
    \nJ=g^{ij}g^{kl}
        g\bigl(\left(\nabla_{e_i} J\right)e_k,\left(\nabla_{e_j}
    J\right)e_l\bigr)
$
 of $\nabla J$ is defined in \cite{GRMa}.
By means of \eqref{F} and \eqref{F-prop}, we obtain the following equivalent formula
\begin{equation}\label{snorm}
    \nJ=g^{ij}g^{kl}g^{pq}F_{ikp}F_{jlq},
\end{equation}
where $F_{ikp}=F(e_i,e_k,e_p)$.
An almost Norden manifold satisfying the
condition $\nJ=0$ is called an isotropic K\"ahler manifold
with Norden metrics \cite{MekMan} or an \emph{isotropic K\"ahler-Norden manifold}.
Let us remark that if a manifold belongs to $\W_0$, then
it is isotropic K\"ahlerian but the inverse statement is not
always true.


Let $R$ be the curvature tensor field of $\nabla$ defined by
$
    R(x,y)z=\nabla_x \nabla_y z - \nabla_y \nabla_x z -
    \nabla_{[x,y]}z$.
The corresponding tensor field of type $(0,4)$ is determined by
$R(x,y,z,w)=g(R(x,y)z,w)$. It has the following properties:
\begin{equation}\label{curv}
\begin{array}{l}%
R(x,y,z,w)=-R(y,x,z,w)=-R(x,y,w,z),\\
R(x,y,z,w)+R(y,z,x,w)+R(z,x,y,w)=0.
\end{array}%
\end{equation}
Any tensor of type (0,4) satisfying \eqref{curv}
is called a \emph{curvature-like tensor}.
The Ricci tensor $\rho$ and the scalar
curvature $\tau$ for $R$ (and similarly for every curvature-like tensor)
are defined as usual by
$\rho(y,z)=g^{ij}R(e_i,y,z,e_j)$ and $\tau=g^{ij}\rho(e_i,e_j)$.

It is well-known that the Weyl tensor $W$ on a pseudo-Rie\-mann\-ian manifold $(M,g)$, $\dim{M}=2n\geq 4$,
is given by
\begin{equation}\label{W}
W=R+\frac{1}{2(n-1)}g\owedge\rho-\frac{\tau}{4(n-1)(2n-1)}g\owedge g,
\end{equation}
where $g\owedge\rho$ is the Kulkarni-Nomizu product of $g$ and $\rho$, i.e.
$\left(g\owedge\rho\right)(x,y,z,w)=g(x,z)\rho(y,w)-g(y,z)\rho(x,w)
+g(y,w)\rho(x,z)-g(x,w)\rho(y,z).$
Moreover, $W$ vanishes if and only if the manifold $(M,g)$
is conformally flat, i.e. it is transformed into a flat manifold by an usual conformal transformation of the metric defined by $\widebar{g}=e^{2u}g$ for a differentiable function $u$ on $M$.

Let $\tR$ be the curvature tensor of $\tn$ defined as usually.
Obviously, the corresponding curvature (0,4)-tensor is
$\tR(x,y,z,u)=\tg(\tR(x,y)z,u)$ and it has the same properties as in \eqref{curv}.
The Weyl tensor $\widetilde{W}$ is generated by $\tn$ and $\tg$ by the same way and it has the same geometrical interpretation for the manifold $(M,J,\tg)$.


\section{The twin interchange
corresponding to the pair of Norden metrics and their Levi-Civita connections}



\subsection{Invariant classification}

\begin{lem}\label{lem:Phi}
  The potential $\Phi(x,y)$ is an anti-invariant tensor under the twin interchange, i.e.
\begin{equation}\label{tP=-P}
    \tP(x,y)=-\Phi(x,y).
\end{equation}
\end{lem}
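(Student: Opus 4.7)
The plan is to unfold the definition of $\Phi$ and apply the twin interchange directly. By \eqref{Phi} the potential reads $\Phi(x,y)=\tn_x y - \n_x y$, i.e.\ the difference of the Levi-Civita connections of $\tg$ and $g$. By the very meaning of the twin interchange, namely the swap of $g$ with $\tg$ and correspondingly of $\n$ with $\tn$, the object $\tP(x,y)$ is simply the analogous expression with the roles of the two connections exchanged, i.e.\ $\n_x y-\tn_x y$.

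Before writing this down I would verify that the twin interchange is indeed an involution at the level of Levi-Civita connections, since only then does ``the interchanged potential'' make sense as a single, well-defined object. Using \eqref{twin}, the twin of the twin metric satisfies
\[
\tg(x,Jy)=g(x,J^{2}y)=-g(x,y),
\]
i.e.\ it equals $-g$. Since a nonzero constant rescaling of a pseudo-Riemannian metric leaves its Christoffel symbols unchanged, the Levi-Civita connection of this twin-of-twin coincides with $\n$. Hence the swap $\tn\leftrightarrow\n$ really does have order two.

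Combining these two observations yields
\[
\tP(x,y)=\n_x y-\tn_x y=-\bigl(\tn_x y-\n_x y\bigr)=-\Phi(x,y),
\]
which is \eqref{tP=-P}. The statement is essentially a tautology once the involutivity of the interchange at the connection level is confirmed, so I would not expect any real obstacle; the only point that merits a line of care is the observation that $-g$ and $g$ share the same Levi-Civita connection, ensuring that the twin interchange is well-defined as an involution on the pair $(\n,\tn)$.
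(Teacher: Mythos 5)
Your proof is correct, but it takes a genuinely different route from the paper's. You work directly at the level of the $(1,2)$-tensors: you observe that the twin of the twin metric is $\tg(x,Jy)=-g(x,y)$, that a constant nonzero rescaling does not change the Levi-Civita connection (so the interchange $\n\leftrightarrow\tn$ is an involution), and hence that $\tP(x,y)=\n_xy-\tn_xy=-\Phi(x,y)$ essentially by definition. The paper instead computes through the fundamental tensors: it first derives the relation \eqref{tFF} expressing $\tF$ in terms of $F$, substitutes into the analogue \eqref{tPtF} of \eqref{PhiF} for the twin manifold, and obtains the $(0,3)$-tensor identity $\tP(x,y,z)=-\Phi(x,y,Jz)$ of \eqref{tPPhi}, from which the $(1,2)$-statement follows via $\tP(x,y,z)=\tg(\tP(x,y),z)$. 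Your argument is shorter and conceptually cleaner for the lemma as stated, and it is consistent with the paper's: from $\tP(x,y)=-\Phi(x,y)$ one recovers $\tP(x,y,z)=\tg(-\Phi(x,y),z)=-g(\Phi(x,y),Jz)=-\Phi(x,y,Jz)$. What the paper's longer route buys is precisely the intermediate identities \eqref{tFF} and \eqref{tPPhi}, which are reused repeatedly afterwards (in Lemma~\ref{lem:f}, Theorem~\ref{thm:inv.cl}, Proposition~\ref{prop:inv.NhN}, and the $\W_1$ computations), so if you adopt your proof you would still need to establish those identities separately for the rest of the development.
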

\begin{proof}
The equalities \eqref{F-prop}, 
\eqref{tn=nPhi},  \eqref{Phi03} and \eqref{PhiF} imply the following relation between $F$ and its corresponding tensor $\tF$ for $(M,J,\tg)$, defined by $\tF(x,y,z)=\tg\bigl(\bigl(\tn_xJ\bigr)y,z\bigr)$,
\begin{equation}\label{tFF}
    \tF(x,y,z)=\frac{1}{2}\bigl\{F(Jy,z,x)-F(y,Jz,x)+F(Jz,y,x)-F(z,Jy,x)\bigr\}.
\end{equation}
Bearing in mind \eqref{PhiF}, we write the corresponding formula for $\tP$ and $\tF$ as
\begin{equation}\label{tPtF}
    \tP(x,y,z)=\frac{1}{2}\bigl\{\tF(Jz,x,y)-\tF(x,y,Jz)-\tF(y,x,Jz)\bigr\}.
\end{equation}
Using \eqref{tFF} and \eqref{tPtF}, we get an expression of $\tP$ in terms of $F$ and then by \eqref{PhiF} we obtain
\begin{equation}\label{tPPhi}
\tP(x,y,z)=-\Phi(x,y,Jz).
\end{equation}
Taking into account that $\tP(x,y,z)$ is defined by $\tP(x,y,z)=\tg(\tP(x,y),z)$, we accomplish the proof.
\end{proof}

In \cite{GaGrMi}, for an arbitrary almost Norden manifold, it is given the following identity
\begin{equation}\label{Phi-prop}
  \Phi(x,y,z)-\Phi(Jx,Jy,z)-\Phi(Jx,y,Jz)-\Phi(x,Jy,Jz)=0.
\end{equation}
The associated 1-forms $f$ and $f^*$ of $\Phi$ are defined by $f(z)=g^{ij}\Phi(e_i,e_j,z)$ and $f^*(z)=g^{ij}\Phi(e_i,Je_j,z)$. Obviously, $f(z)=g(\mathrm{tr}{\Phi},z)$ holds.
Then, from \eqref{Phi-prop} we get the identity
\begin{equation}\label{f-prop}
f(z)=f^*(Jz).
\end{equation}
The latter identity resembles the equality $\ta(z)=\ta^*(Jz)$, equivalent to \eqref{ta*taJ}.
Indeed, there exists a relation between the associated 1-forms of $F$ and $\Phi$. It follows from \eqref{PhiF} and has the form
\begin{equation}\label{fta}
f(z)=\ta^*(z),\quad f^*(z)=-\ta(z).
\end{equation}

\begin{lem}\label{lem:f}
The associated 1-forms $f$ and $f^*$ of $\Phi$ are invariant under the twin interchange, i.e.
\begin{equation*}\label{tff}
\widetilde{f}(z)=f(z), \qquad \widetilde{f}^*(z)=f^*(z).
\end{equation*}
\end{lem}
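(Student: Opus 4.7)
The plan is to unpack the definitions of $\widetilde{f}$ and $\widetilde{f}^*$ and rewrite them in terms of $\Phi$ and $g$ using two ingredients that are already available: the formula $\widetilde{\Phi}(x,y,z)=-\Phi(x,y,Jz)$ from equation \eqref{tPPhi} in the proof of \lemref{lem:Phi}, and the component relation between the inverse matrices of the twin metrics. For the latter, since $\widetilde{g}_{ij}=g(e_i,Je_j)=g_{ik}J^k_j$, applying $J^2=-I$ yields $\widetilde{g}^{ij}=-g^{ik}J^j_k$. This is the only genuinely new computational input needed.

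First I would treat $\widetilde{f}$. Substituting the two expressions above into $\widetilde{f}(z)=\widetilde{g}^{ij}\widetilde{\Phi}(e_i,e_j,z)$, the two minus signs cancel and the factor $J^j_k$ can be absorbed into the middle argument of $\Phi$, producing $\widetilde{f}(z)=g^{ik}\Phi(e_i,Je_k,Jz)=f^*(Jz)$. The identity \eqref{f-prop} then immediately gives $\widetilde{f}(z)=f(z)$.

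The computation for $\widetilde{f}^*$ is analogous. Starting from $\widetilde{f}^*(z)=\widetilde{g}^{ij}\widetilde{\Phi}(e_i,Je_j,z)$, the same substitutions now produce a double $J$ in the middle slot, i.e.\ $J^2=-I$, so one obtains $\widetilde{f}^*(z)=-g^{ik}\Phi(e_i,e_k,Jz)=-f(Jz)$. Using \eqref{f-prop} once more in the form $f(Jz)=f^*(J^2 z)=-f^*(z)$ closes the argument.

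I do not expect any serious obstacle: the calculation is essentially a bookkeeping exercise in index manipulation. The only subtlety to watch is consistency of sign conventions, specifically getting the correct formula for $\widetilde{g}^{ij}$ (the symmetry $\widetilde{g}^{ij}=\widetilde{g}^{ji}$ provides a useful internal check), and making sure the identity \eqref{f-prop} is invoked in the right direction in each of the two cases. Everything else follows mechanically from \eqref{tPPhi} and the definitions of $f$ and $f^*$.
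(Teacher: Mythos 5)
Your proposal is correct and follows essentially the same route as the paper: the key step in both is to contract \eqref{tPPhi} with $\tg^{ij}=-J^j_kg^{ik}$ to obtain $\widetilde{f}(z)=f^*(Jz)$ and then invoke \eqref{f-prop}. The only (immaterial) difference is in the $f^*$ case, where you compute $\widetilde{f}^*(z)=-f(Jz)$ directly, whereas the paper appeals to the identity \eqref{f-prop} again (read on the manifold $(M,J,\tg)$); both arguments are valid.
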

\begin{proof}
Taking the trace of \eqref{tPPhi} by $\tg^{ij}=-J^j_kg^{ik}$ for $x=e_i$ and $y=e_j$, we have $\widetilde{f}(z)=f^*(Jz)$. Then, comparing the latter equality and \eqref{f-prop}, we obtain the statement for $f$. The relation in the case of $f^*$ is valid because of \eqref{f-prop}.
\end{proof}

\begin{lem}\label{lem:tata*}
The Lee forms $\ta$ and $\ta^*$ are invariant under the twin interchange, i.e.
\[
\ta(z)=\widetilde{\ta}(z),\qquad \ta^*(z)=\widetilde{\ta}^*(z).
\]
\end{lem}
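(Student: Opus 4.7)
The plan is to deduce this lemma as an immediate consequence of Lemma~\ref{lem:f} (invariance of $f$, $f^*$) combined with the universal identity \eqref{fta} relating the associated $1$-forms of $\Phi$ to the Lee forms of $F$.

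First I would observe that \eqref{fta} was derived from \eqref{PhiF} alone, and \eqref{PhiF} holds on any almost Norden manifold. Hence the same identity holds verbatim on the twin manifold $(M,J,\tg)$: writing $\widetilde{\ta}$, $\widetilde{\ta}^*$ for the Lee forms of $\tF$ computed with respect to $\tg$, we have
\begin{equation*}
  \widetilde{f}(z)=\widetilde{\ta}^*(z),\qquad \widetilde{f}^*(z)=-\widetilde{\ta}(z).
\end{equation*}

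Next I would invoke \lemref{lem:f}, which gives $\widetilde{f}=f$ and $\widetilde{f}^*=f^*$. Substituting into the two equalities above and applying \eqref{fta} on $(M,J,g)$ yields $\widetilde{\ta}^*(z)=f(z)=\ta^*(z)$ and $-\widetilde{\ta}(z)=f^*(z)=-\ta(z)$, which is exactly the claim.

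There is essentially no obstacle here; the content of the lemma is already packaged inside \lemref{lem:f} together with the fact that \eqref{fta} is a pointwise algebraic identity valid on any almost Norden manifold, not just on the distinguished one. The only thing to be careful about is to verify that the trace conventions defining $\widetilde{\ta}$ and $\widetilde{\ta}^*$ from $\tF$ (with the inverse of $\tg$) match the ones used to derive \eqref{fta}, so that \eqref{fta} transfers to the tilde version without sign or factor corrections — this is straightforward from $\tg^{ij}=-J^j_k g^{ik}$ and the symmetry property \eqref{F-prop} transferred to $\tF$ via \eqref{tFF}.
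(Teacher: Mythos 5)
Your proposal is correct and follows exactly the paper's route: the paper also deduces this lemma directly from \lemref{lem:f} together with the identity \eqref{fta}, applied on both $(M,J,g)$ and $(M,J,\tg)$. You merely spell out the substitution more explicitly than the one-line proof in the text.
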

\begin{proof} It follows directly from \lemref{lem:f} and \eqref{fta}.
\end{proof}

\begin{thm}\label{thm:inv.cl}
  All classes of almost Norden manifolds according to the classification in \cite{GaBo} are invariant under the twin interchange.
\end{thm}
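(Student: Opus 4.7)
My plan is to use the characterization of the classes via the potential $\Phi$ given in \eqref{class2}, since \lemref{lem:Phi} and \lemref{lem:f} already govern how $\Phi$ and its trace $f$ transform under the twin interchange. The Ganchev--Borisov list consists of the K\"ahler--Norden class $\W_0$, the three basic classes $\W_1,\W_2,\W_3$, and their four nontrivial direct sums. Because the correspondence $\Phi\mapsto\tP$ encoded by \eqref{tPPhi} is linear, it is enough to establish invariance of each basic class; invariance of the four direct-sum classes will then follow from the direct-sum nature of the decomposition, applied componentwise.

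The class $\W_0$ is immediate from \eqref{tP=-P}: $\Phi=0$ if and only if $\tP=0$. For $\W_3$ I would apply \eqref{tPPhi} twice, together with $J^2=-\mathrm{id}$, to write
\[
\tP(Jx,Jy,z)=-\Phi(Jx,Jy,Jz)=-\Phi(x,y,Jz)=\tP(x,y,z),
\]
where the middle equality is the $\W_3$-condition for $\Phi$; this is precisely the $\W_3$-condition formulated for $\tg$. The $\W_2$ case is analogous and yields $\tP(Jx,Jy,z)=-\tP(x,y,z)$, while the auxiliary requirement $\widetilde{f}=0$ is supplied by \lemref{lem:f}.

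For $\W_1$ I would rewrite the defining identity on the $\tg$-side: substituting $\tg(x,y)=g(x,Jy)$, $\tg(x,Jy)=-g(x,y)$, and $\widetilde{f}=f$ on the right-hand side, and computing the left-hand side $\tP(x,y,z)=-\Phi(x,y,Jz)$ by expanding the assumed $\W_1$-form of $\Phi$, both sides collapse via $J^2=-\mathrm{id}$ to the same expression $\frac{1}{2n}\{g(x,Jy)f(z)-g(x,y)f(Jz)\}$. The main obstacle is purely bookkeeping: ensuring that the metric used to raise indices agrees on both sides and that the sign changes from applying $J$ in the last slot are tracked correctly. Once the two sides are written out explicitly, the identification is immediate.
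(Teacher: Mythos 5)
Your proof is correct and follows essentially the same route as the paper: both work with the $\Phi$-characterization \eqref{class2} of the classes together with the transformation law $\tP(x,y,z)=-\Phi(x,y,Jz)$ from \eqref{tPPhi} and the invariance of $f$ from \lemref{lem:f}. The only difference is that you dispose of the four direct-sum classes by linearity of $\Phi\mapsto\tP$ applied componentwise, whereas the paper writes out their defining conditions ($\W_1\oplus\W_2$, $\W_2\oplus\W_3$, $\W_1\oplus\W_3$) and checks them directly; both are valid, and your explicit verifications of $\W_0$, $\W_1$, $\W_2$, $\W_3$ supply exactly the computations the paper declares ``obvious.''
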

\begin{proof}
We use the classification by $\Phi$ in \cite{GaGrMi}: the definitions of the basic three classes are given in \eqref{class2}, for the special class and the whole class  we have $\W_0:\; \Phi=0$ $\W_1\oplus\W_2\oplus\W_3:\;$\emph{no condition} as well as the rest classes are defined as follows:
\[
\begin{array}{l}
\W_1\oplus\W_2:\; \Phi(x,y,z)=-\Phi(Jx,Jy,z),\qquad \W_2\oplus\W_3:\; f=0,\\ \W_1\oplus\W_3:\;\Phi(x,y,z)-\Phi(Jx,Jy,z)=\frac{1}{n}\left\{g(x,y)f(z)+g(x,Jy)f(Jz)\right\}.
\end{array}
\]

Obviously, applying \lemref{lem:Phi}, \lemref{lem:f}, equalities \eqref{tPPhi} and \eqref{Phi03}, we establish the truthfulness of the statement.
\end{proof}


Let us remark that the invariance of $\W_1$ and $\W_3$ is proved in \cite{GaGrMi} and \cite{MekManGri22}, respectively.

\subsection{Invariant connection}

Let us define a linear connection $D$ by
\begin{equation}\label{hn=nP}
D_x y=\n_x y+\frac12\Phi(x,y).
\end{equation}
By virtue of \eqref{tn=nPhi}, \eqref{tP=-P} and \eqref{hn=nP}, we have the following
\[
\widetilde{D}_x y=\tn_x y+\frac12\tP(x,y)=\n_x y+\Phi(x,y)-\frac12\Phi(x,y)
=\n_x y+\frac12\Phi(x,y)=D_x y.
\]
Therefore, $D$ is an invariant connection under the twin interchange.
Bearing in mind \eqref{Phi}, we establish that $D$ is actually the \emph{average connection} of $\n$ and $\tn$, because
\begin{equation}\label{av.con}
D_x y=\n_x y+\frac12\Phi(x,y)=\n_x y+\frac12\left\{\tn_x y-\n_x y\right\}
=\frac12\left\{\n_x y+\tn_x y\right\}.
\end{equation}
So, we obtain
\begin{prop}\label{prop:inv.conn}
    The average connection $D$ of $\n$ and $\tn$ is an invariant connection under the twin interchange.
\end{prop}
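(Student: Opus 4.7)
The plan is to guess the correct connection and then verify the invariance by a short direct computation, leveraging \lemref{lem:Phi} (the anti-invariance $\tP = -\Phi$). Since the twin interchange sends $\n$ to $\tn$ (equivalently, shifts by $+\Phi$) and sends $\Phi$ to $-\Phi$, a linear combination of the form $\n + c\,\Phi$ will be invariant exactly when $c$ is fixed by the map $c \mapsto 1-c$, forcing $c=\tfrac{1}{2}$. This identifies the candidate connection as
\[
D_x y := \n_x y + \tfrac{1}{2}\Phi(x,y).
\]

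First, I would verify that $D$ is indeed a linear connection: this is automatic because $\n$ is a connection and $\Phi$ is a $(1,2)$-tensor field (so adding half of it preserves the connection axioms, including the non-tensorial Leibniz rule in the second slot). Next, I would compute $\widetilde{D}$, which by definition is built analogously from $\tn$ and $\tP$, and substitute $\tn = \n + \Phi$ from \eqref{tn=nPhi} together with $\tP = -\Phi$ from \lemref{lem:Phi}:
\[
\widetilde{D}_x y = \tn_x y + \tfrac{1}{2}\tP(x,y) = \n_x y + \Phi(x,y) - \tfrac{1}{2}\Phi(x,y) = \n_x y + \tfrac{1}{2}\Phi(x,y) = D_x y.
\]
This establishes the invariance. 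Finally, rewriting $\tfrac{1}{2}\Phi(x,y) = \tfrac{1}{2}(\tn_x y - \n_x y)$ via \eqref{Phi} immediately yields $D_x y = \tfrac{1}{2}(\n_x y + \tn_x y)$, identifying $D$ as the average connection.

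There is no real obstacle here: once \lemref{lem:Phi} is in hand, the proof is essentially a one-line substitution. The only subtle point is the motivation for the coefficient $\tfrac{1}{2}$, which is forced by the condition that the shift under the twin interchange (namely adding $\Phi$ to $\n$) must be cancelled symmetrically by the sign flip of $\Phi$; this is precisely the algebraic content of averaging.
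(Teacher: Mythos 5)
Your proof is correct and follows essentially the same route as the paper: define $D_xy=\n_xy+\tfrac12\Phi(x,y)$, substitute $\tn=\n+\Phi$ and $\tP=-\Phi$ from Lemma~\ref{lem:Phi} to get $\widetilde{D}=D$, and then identify $D$ as the average of $\n$ and $\tn$ via \eqref{Phi}. The added remark motivating the coefficient $\tfrac12$ (the fixed point of $c\mapsto 1-c$) is a nice touch but does not change the substance of the argument.
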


\begin{cor}\label{cor:inv.conn}
    If the invariant connection $D$ vanishes then $(M,J,g)$ and $(M,J,\tg)$ are K\"ahler-Norden manifolds and $\n=\tn$ also vanishes.
\end{cor}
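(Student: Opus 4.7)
The plan is to substitute $D=0$ into both expressions \eqref{hn=nP} and \eqref{av.con} for $D$ and to track the consequences for $\n$, $\tn$, and $\Phi$. From \eqref{av.con}, the hypothesis $D=0$ gives $\tn_xy = -\n_xy$ for all $x,y$, while \eqref{hn=nP} gives $\n_xy = -\tfrac{1}{2}\Phi(x,y)$; the two are consistent and together yield $\Phi = -2\n$ and $\tn = -\n$.

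Next I would use the symmetry $\Phi(x,y) = \Phi(y,x)$ (noted right after \eqref{tn=nPhi}, and valid because both Levi-Civita connections are torsion-free). From $\Phi = -2\n$ this gives $\n_xy = \n_yx$, and combined with the torsion-free identity $\n_xy - \n_yx = [x,y]$ for the Levi-Civita connection of $g$ it forces $[x,y] = 0$ for all vector fields $x,y$ on $M$. Feeding this back into the Koszul formula for $\n$ together with the constraint $\n = -\tfrac{1}{2}\Phi$, one checks that the only possibility compatible with $\n$ being the Levi-Civita connection of a pseudo-Riemannian metric is $\n = 0$; then $\tn = -\n = 0$, so $\n = \tn$, and $\Phi = \tn - \n = 0$.

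Finally, $\Phi = 0$ is precisely the defining condition for the special class $\W_0$ of K\"ahler-Norden manifolds in \eqref{class2}, so $(M,J,g)$ is K\"ahler-Norden; by \lemref{lem:Phi}, $\tP = -\Phi = 0$ as well, so $(M,J,\tg)$ is K\"ahler-Norden too. The main obstacle is the middle step: justifying that global commutativity $[x,y] = 0$, combined with the Levi-Civita axioms for $g$, forces $\n$ itself to vanish. This amounts to observing that the hypothesis $D=0$ is extremely restrictive and effectively a statement about the degenerate limit where both Levi-Civita connections collapse to zero. The rest of the argument is a chain of routine substitutions using \eqref{Phi}, \eqref{av.con}, \eqref{class2}, and \lemref{lem:Phi}.
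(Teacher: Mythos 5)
Your argument follows the paper's proof essentially step for step: from $D=0$ you extract $\tn=-\n$ and $\Phi=-2\n$ via \eqref{hn=nP} and \eqref{av.con}, the symmetry of $\Phi$ forces $[x,y]=\n_xy-\n_yx=0$, the Koszul formula then yields $\n=0$, and hence $\tn$ and $\Phi$ vanish and both manifolds lie in $\W_0$. The step you single out as the main obstacle --- passing from $[x,y]=0$ to $\n=0$ via the Koszul formula --- is precisely the step the paper itself leaves at the level of an assertion, so your treatment is the same approach with no additional gap relative to the original.
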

\begin{proof}
Let us suppose that $D=0$. Then $\n=-\tn$ and $\Phi=-2\n$, because of \eqref{hn=nP} and \eqref{av.con}. Hence we obtain $[x,y]=\n_x y-\n_y x=-\frac12\{\Phi(x,y)-\Phi(y,x)\}=0$ and consequently, using the Koszul formula
\[
2g(\nabla_x y,z)=xg(y,z)+yg(x,z)-zg(x,y)+g([x,y],z)+g([z,x],y)+g([z,y],x),
\]
we get $\n=0$. Thus, $\tn$ and $\Phi$ vanish, i.e. $(M,J,g)$ and $(M,J,\tg)$ belong to $\W_0$.
\end{proof}

\subsection{Invariant tensors}

As it is well-known, the Nijenhuis tensor $N$ of the almost complex structure $J$ is
defined by
\begin{equation*}\label{NJ}
N(x,y) = [J, J](x, y)=\left[Jx,Jy\right]-\left[x,y\right]-J\left[Jx,y\right]-J\left[x,Jy\right].
\end{equation*}
Besides $N$, in \cite{Man50} it is defined the following symmetric
(1,2)-tensor $S$  in analogy by
\[
S(x,y)=\{J ,J\}(x,y)=\{Jx,Jy\}-\{x,y\}-J\{Jx,y\}-J\{x,Jy\},
\]
where the symmetric braces $\{x,y\}=\nabla_xy+\nabla_yx$
are used instead of the antisymmetric brackets $[x,y]=\nabla_xy-\nabla_yx$.
The tensor $S$ is also called the  \emph{associated
Nijenhuis tensor} of $J$. The tensor $S$ coincides with the associated tensor of ${N}$ introduced in \cite{GaBo} by an equivalent equality for $F$.

\begin{prop}\label{prop:inv.NhN}
    The Nijenhuis tensor is invariant and the associated Nijenhuis tensor is anti-invariant under the twin interchange, i.e.
    \[
    N(x,y)=\tN(x,y),\qquad S(x,y)=-\widetilde{S}(x,y).
    \]
\end{prop}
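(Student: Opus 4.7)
My plan is to treat the two claims separately, since they have rather different characters.

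For $N$, the Nijenhuis tensor depends only on $J$ and the Lie bracket $[\cdot,\cdot]$, and the twin interchange leaves $J$ unchanged while merely replacing the torsion-free $\n$ by another torsion-free connection, which produces the same Lie brackets of vector fields. Hence $\tN(x,y) = N(x,y)$ is essentially immediate and requires no computation.

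For $S$, the symmetric braces $\{x,y\} = \n_x y + \n_y x$ genuinely depend on the connection, so an explicit computation is unavoidable. Expanding each brace via the torsion-free $\n$ and using $(\n_x J)y = \n_x(Jy) - J\n_x y$, one first obtains the manifestly covariant formula
\[
S(x,y) = (\n_{Jx}J)y + (\n_{Jy}J)x - J(\n_x J)y - J(\n_y J)x.
\]
The key intermediate step I would aim for is the identity
\[
S(x,y) = 2\bigl[\Phi(x,y) - \Phi(Jx,Jy)\bigr],
\]
which I would derive by passing to the $(0,3)$-form via $g$, applying $F(x,y,Jz) = -F(x,Jy,z)$ from \eqref{F-prop} to each of the four summands, and then comparing the result with the expression for $\Phi(x,y,z) - \Phi(Jx,Jy,z)$ produced by \eqref{PhiF}. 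Once this identity is in hand, applying the same derivation on the twin manifold $(M,J,\tg)$ and invoking $\tP = -\Phi$ from \lemref{lem:Phi} yields
\[
\tS(x,y) = 2\bigl[\tP(x,y) - \tP(Jx,Jy)\bigr] = -2\bigl[\Phi(x,y) - \Phi(Jx,Jy)\bigr] = -S(x,y),
\]
which is the claimed anti-invariance.

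I expect the main obstacle to be precisely the identity $S = 2[\Phi(\cdot,\cdot) - \Phi(J\cdot,J\cdot)]$: the right-hand side of \eqref{PhiF} is asymmetric in its three arguments, so some careful bookkeeping with \eqref{F-prop} is needed to align the four summands of $S$ with the symmetrised combination $\Phi(x,y,z) - \Phi(Jx,Jy,z)$. The identity \eqref{Phi-prop} may also prove useful for recognising equivalent $(1,2)$-forms of the answer. Everything else reduces to routine symmetry manipulation of $\Phi$ after the substitution $\tn \mapsto \n + \Phi$.
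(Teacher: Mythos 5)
Your proof is correct, but it takes a genuinely different route from the paper's. For $N$, the paper does not use your (correct and more elementary) observation that the Nijenhuis tensor depends only on $J$ and the Lie brackets; instead it invokes the identity \eqref{NPhi} relating $N$ to $\Phi$, deduces $\tN(x,y,z)=-N(x,Jy,z)$ from \eqref{tPPhi}, and then needs the extra property $N(x,y,z)=N(x,Jy,Jz)$ cited from \cite{Man50} to remove the interior $J$. Your argument bypasses all of this: since $\n$ and $\tn$ are both torsion-free they induce the same brackets, so $\tN=N$ as $(1,2)$-tensors with no computation. For $S$, the two arguments share the same backbone --- reduce $S$ to $\Phi$ and invoke the anti-invariance of the potential --- but differ in execution: the paper simply cites $S(x,y,z)=2\Phi(x,y,z)-2\Phi(Jx,Jy,z)$ from \cite{GaGrMi} (its \eqref{wNPhi}) and works at the $(0,3)$ level with $\tP(x,y,z)=-\Phi(x,y,Jz)$, obtaining $\tS(x,y,z)=-S(x,y,Jz)$ and converting back through $\tg$; you instead derive the key identity from scratch (your covariant formula for $S$ leads to $S(x,y,z)=F(Jx,y,z)+F(Jy,x,z)+F(x,y,Jz)+F(y,x,Jz)$, which combined with \eqref{PhiF} and \eqref{F-prop} does yield $S(x,y,z)=2\bigl\{\Phi(x,y,z)-\Phi(Jx,Jy,z)\bigr\}$) and then apply the $(1,2)$-form $\tP(x,y)=-\Phi(x,y)$ of \eqref{tP=-P} directly, which avoids the final bookkeeping with $Jz$. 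The paper's route buys brevity by citation; yours buys self-containedness and a cleaner, metric-free treatment of the $N$ half.
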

\begin{proof}
The relations of $N$ and $S$ with $\Phi$ are given in \cite{GaGrMi} as follows
\begin{gather}
  N(x,y,z)=2\Phi(z,Jx,Jy)-2\Phi(z,x,y),\label{NPhi}\\
  S(x,y,z)=2\Phi(x,y,z)-2\Phi(Jx,Jy,z).\label{wNPhi}
\end{gather}
Using \eqref{tPPhi}, the latter equalities imply the following
\begin{gather}
  \tN(x,y,z)=-N(x,Jy,z),\label{tNN}\\
  \tS(x,y,z)=-S(x,y,Jz).\label{twNwN}
\end{gather}
In \cite{Man50}, it is given the property $N(x, y,z) = N(x, Jy, Jz)$ which is equivalent to $N(x,Jy,z) = -N(x,y, Jz)$. Then \eqref{tNN} gets the form
\begin{gather}
  \tN(x,y,z)=N(x,y,Jz).\label{tNN2}
\end{gather}
The equalities \eqref{tNN2} and \eqref{twNwN} yield the relations in the statement.
\end{proof}

It is
well-known the following relation between the curvature tensors of $\n$ and $\tn$ related by \eqref{tn=nPhi}
\begin{equation}\label{tRRS}
    \tR(x,y)z=R(x,y)z+Q(x,y)z,
\end{equation}
where
\begin{equation}\label{Q}
    Q(x,y)z= \left(\n_x \Phi\right)(y,z)- \left(\n_y \Phi\right)(x,z)
    +\Phi\left(x,\Phi(y,z)\right)-\Phi\left(y,\Phi(x,z)\right).
\end{equation}

Let us consider the following tensor, which is part of the tensor $Q$,
\begin{equation}\label{A}
A(x,y)z=\Phi(x,\Phi(y,z))-\Phi(y,\Phi(x,z)).
\end{equation}
\begin{lem}\label{lem:A}
  The tensor $A(x,y)z$ is invariant under the twin interchange, i.e.
\begin{equation}\label{A=tA}
A(x,y)z=\tA(x,y)z.
\end{equation}
\end{lem}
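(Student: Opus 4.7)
The plan is to prove \eqref{A=tA} by direct substitution, exploiting the fact that $A$ is \emph{quadratic} in $\Phi$ while $\Phi$ is merely \emph{anti-invariant} under the twin interchange; the two sign changes should cancel.

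More concretely, I would first write down the definition of $\tA$ by replacing $\Phi$ with $\tP$ throughout \eqref{A}, giving
\[
\tA(x,y)z=\tP\bigl(x,\tP(y,z)\bigr)-\tP\bigl(y,\tP(x,z)\bigr).
\]
Next I would apply \lemref{lem:Phi} to the \emph{inner} occurrences of $\tP$, replacing $\tP(y,z)$ and $\tP(x,z)$ with $-\Phi(y,z)$ and $-\Phi(x,z)$, respectively. Since $\tP$, viewed as a $(1,2)$-tensor, is linear in its second slot, I can then apply \lemref{lem:Phi} again to the \emph{outer} occurrences, incurring a second minus sign. The two minus signs multiply to $+1$, and the identity \eqref{A=tA} follows immediately.

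I do not anticipate any genuine obstacle: the only thing to be careful about is to keep \lemref{lem:Phi} in its $(1,2)$-tensor form (not the $(0,3)$ form \eqref{tPPhi}), since \eqref{A} is stated at the level of the $(1,2)$-tensor $\Phi(x,y)$, and to use linearity in the second argument when pulling the inner minus sign out. The proof is therefore essentially a one-line computation, and no properties beyond \lemref{lem:Phi} and the multilinearity of $\Phi$ are needed.
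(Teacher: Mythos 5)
Your proposal is correct and is essentially identical to the paper's own proof: the paper likewise substitutes $\tP=-\Phi$ from Lemma~\ref{lem:Phi} into the definition of $\tA$ and observes that the two sign changes cancel because $A$ is quadratic in $\Phi$. Your explicit remark about using bilinearity to pull the inner minus sign out is a useful clarification of what the paper leaves implicit in the word ``immediately.''
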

\begin{proof}
Since \eqref{tP=-P} is valid, we obtain immediately
\begin{equation*}\label{nPtnP}
\Phi(x,\Phi(y,z))-\Phi(y,\Phi(x,z))=\tP(x,\tP(y,z))-\tP(y,\tP(x,z)),
\end{equation*}
which yields relation \eqref{A=tA}.
\end{proof}

\begin{lem}\label{lem:Q}
  The tensor $Q(x,y)z$ is anti-invariant under the twin interchange, i.e.
\begin{equation}\label{tS=-Q}
    \tQ(x,y)z=-Q(x,y)z.
\end{equation}
\end{lem}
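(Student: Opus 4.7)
The plan is to expand $\tQ(x,y)z$ via the defining formula \eqref{Q} rewritten in the tilded quantities, and then reduce everything to $\n$ and $\Phi$ using the relations $\tn = \n + \Phi$ from \eqref{tn=nPhi} and $\tP = -\Phi$ from \lemref{lem:Phi}. Concretely, I would start from
\[
\tQ(x,y)z = \bigl(\tn_x\tP\bigr)(y,z) - \bigl(\tn_y\tP\bigr)(x,z) + \tA(x,y)z,
\]
so by \lemref{lem:A} the quadratic piece $\tA(x,y)z$ is already equal to $A(x,y)z$, and the whole problem reduces to controlling the difference of covariant derivatives.

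Next I would expand $(\tn_x\tP)(y,z)$ by the Leibniz rule:
\[
(\tn_x\tP)(y,z) = \tn_x\bigl(\tP(y,z)\bigr) - \tP(\tn_x y, z) - \tP(y,\tn_x z).
\]
Substituting $\tP = -\Phi$ and $\tn = \n + \Phi$ and regrouping, I expect to get
\[
(\tn_x\tP)(y,z) = -(\n_x\Phi)(y,z) - \Phi\bigl(x,\Phi(y,z)\bigr) + \Phi\bigl(\Phi(x,y),z\bigr) + \Phi\bigl(y,\Phi(x,z)\bigr),
\]
and similarly with $x$ and $y$ swapped. Subtracting and using the symmetry $\Phi(x,y)=\Phi(y,x)$ to cancel the $\Phi(\Phi(\cdot,\cdot),\cdot)$ terms, the residual cross-terms combine into exactly $-2A(x,y)z$, leaving
\[
(\tn_x\tP)(y,z) - (\tn_y\tP)(x,z) = -\bigl[(\n_x\Phi)(y,z) - (\n_y\Phi)(x,z)\bigr] - 2A(x,y)z.
\]

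Adding back $\tA(x,y)z = A(x,y)z$ yields $\tQ = -Q$ after matching with \eqref{Q}. The calculation is routine once the substitutions are set up; the only real point of care is bookkeeping the eight terms coming from the two Leibniz expansions and verifying that the $\Phi\bigl(\Phi(x,y),z\bigr)$ pieces cancel by symmetry of $\Phi$ while the remaining cross-terms assemble into precisely $-2A(x,y)z$. That sign of $-2$ (rather than $0$ or $-1$) is what makes the final combination with $+\tA = +A$ collapse to $-Q$, so the main obstacle is simply to carry out that cancellation cleanly.
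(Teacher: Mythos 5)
Your proof is correct and follows essentially the same route as the paper: the paper expands $(\n_x\Phi)(y,z)$ in terms of $\tn$ and $\tP$ and arrives at the identity $(\n_x\Phi)(y,z)-(\n_y\Phi)(x,z)=-(\tn_x\tP)(y,z)+(\tn_y\tP)(x,z)-2\tA(x,y)z$, which is exactly your key relation read in the opposite direction. The bookkeeping you describe (cancellation of the $\Phi(\Phi(x,y),z)$ terms by symmetry and the residual $-2A(x,y)z$) matches the paper's computation.
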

\begin{proof}
For the covariant derivative of $\Phi$ we have
\[
\left(\n_x\Phi\right)(y,z)=\n_x\Phi(y,z)-\Phi(\n_x y,z)-\Phi(y,\n_x z).
\]
Applying \eqref{tn=nPhi} and \eqref{tP=-P}, we get
\[
\left(\n_x\Phi\right)(y,z)=-(\tn_x\tP)(y,z)-\tP(x,\tP(y,z))+\tP(y,\tP(x,z))+\tP(z,\tP(x,y)).
\]
As a sequence of the latter equality and \eqref{A} we obtain
\begin{equation}\label{nP}
\begin{array}{l}
    \left(\n_x\Phi\right)(y,z)-\left(\n_y\Phi\right)(x,z)=
    -(\tn_x\tP)(y,z)+(\tn_y\tP)(x,z)-2\widetilde{A}(x,y)z.
\end{array}
\end{equation}
Then, \eqref{Q}, \eqref{A}, \eqref{A=tA} and \eqref{nP} imply relation \eqref{tS=-Q}.
\end{proof}

\begin{prop}\label{prop:inv.tensor2}
    The curvature tensor $K$ of the average connection $D$ for $\n$ and $\tn$ is an invariant tensor under the twin interchange.
\end{prop}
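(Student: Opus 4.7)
The plan is to deduce this as an essentially immediate consequence of \propref{prop:inv.conn}. The curvature tensor of any linear connection is built from the connection alone, via $K(x,y)z = D_x D_y z - D_y D_x z - D_{[x,y]}z$, and the Lie bracket is a metric-free object. Hence once \propref{prop:inv.conn} tells us that $\widetilde{D}=D$, running the same construction with the twin-interchanged data reproduces exactly the same tensor, i.e.\ $\widetilde{K}=K$. Conceptually the proof is one line.

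To make the invariance more transparent (and to have a formula available for the example in Section~3), I would also compute $K$ explicitly in terms of $R$, $\widetilde{R}$ and the auxiliary tensor $A$ of \lemref{lem:A}. Substituting $D_xy=\n_xy+\tfrac12\Phi(x,y)$ into the curvature formula and antisymmetrizing in $x,y$ produces an $R(x,y)z$ term, a quadratic piece $\tfrac14\bigl\{\Phi(x,\Phi(y,z))-\Phi(y,\Phi(x,z))\bigr\}=\tfrac14 A(x,y)z$, a derivative piece $\tfrac12\bigl\{(\n_x\Phi)(y,z)-(\n_y\Phi)(x,z)\bigr\}$, and cross terms of the form $\tfrac12\Phi(\n_xy-\n_yx,z)$ which, by torsion-freeness of $\n$, cancel the $\tfrac12\Phi([x,y],z)$ coming from $D_{[x,y]}z$. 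Rewriting the derivative piece via \eqref{Q} as $\tfrac12\bigl(Q(x,y)z-A(x,y)z\bigr)$ and using $\widetilde{R}=R+Q$ from \eqref{tRRS}, a short rearrangement should give the clean identity
\begin{equation*}
K(x,y)z = \tfrac12\bigl(R(x,y)z+\widetilde{R}(x,y)z\bigr) - \tfrac14\, A(x,y)z,
\end{equation*}
from which the invariance is immediate: the twin interchange swaps $R$ and $\widetilde{R}$, leaving the symmetric combination fixed, while $A$ is invariant by \lemref{lem:A}.

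I do not expect a genuine obstacle here, since the high-level argument is a triviality once \propref{prop:inv.conn} is in hand. The only bookkeeping care needed is in the explicit derivation: one must expand $\n_x\Phi(y,z)$ by the Leibniz rule, collect the three resulting $\Phi(\n_\cdot\cdot,\cdot)$ terms, and verify that after antisymmetrization in $x,y$ they combine with the $-D_{[x,y]}z$ contribution to eliminate every undifferentiated $\Phi([x,y],z)$. Once that cancellation is confirmed, the rest is algebraic substitution.
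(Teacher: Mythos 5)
Your proof is correct, but your primary argument takes a genuinely different and more economical route than the paper's. The paper does not invoke the tautology that an invariant connection has an invariant curvature; instead it derives the explicit formula \eqref{hRRSA}, namely $K(x,y)z=R(x,y)z+\frac12 Q(x,y)z-\frac14 A(x,y)z$, and then concludes $\widetilde{K}=K$ from the relation $\tR=R+Q$ in \eqref{tRRS} together with \lemref{lem:Q} (anti-invariance of $Q$) and \lemref{lem:A} (invariance of $A$). Your one-line deduction from \propref{prop:inv.conn} is valid for the $(1,3)$-curvature tensor, which is what the proposition asserts: the curvature construction uses only the connection and the Lie bracket, neither of which is touched by the swap $g\leftrightarrow\tg$, so $\widetilde{D}=D$ forces $\widetilde{K}=K$ with no computation. (Had the claim concerned the $(0,4)$-tensor obtained by lowering with $g$ versus $\tg$, more care would be needed, but that is not the statement here.) What the paper's longer route buys is the formula itself: \eqref{hRRSA} is required for \coref{cor:inv.tensor2}, for the relation \eqref{wRbRPhi}, and for the component computations in Section~3, so the explicit expansion is not optional in the paper's development --- it is merely logically independent of the invariance claim. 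Your secondary computation reproduces exactly the paper's identity (your $K=\frac12\{R+\tR\}-\frac14 A$ is \eqref{wRbRPhi} combined with \eqref{bR=RS}), and the bookkeeping you describe --- expanding $\n_x\Phi(y,z)$ by the Leibniz rule and cancelling the undifferentiated $\Phi([x,y],z)$ against the cross terms via torsion-freeness of $\n$ --- is the correct and complete account of that expansion.
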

\begin{proof}
From \eqref{hn=nP}, using the formulae
\eqref{tn=nPhi}, \eqref{tRRS}, \eqref{Q} and \eqref{A}, we get the following relation
\begin{equation*}\label{hRRS}
\begin{array}{l}
    K(x,y)z=R(x,y)z+\frac12\left(\n_x\Phi\right)(y,z)-\frac12\left(\n_y\Phi\right)(x,z)
    +\frac14 A(x,y)z,
\end{array}
\end{equation*}
which is actually
\begin{equation}\label{hRRSA}
    K(x,y)z=R(x,y)z+\frac12Q(x,y)z
    -\frac14 A(x,y)z.
\end{equation}
By virtue of \eqref{tRRS}, \eqref{A=tA}, \eqref{tS=-Q} and \eqref{nP}, we establish the relation $\widetilde{K}=K$.
\end{proof}

As a consequence of \eqref{hRRSA}, we obtain the following
\begin{cor}\label{cor:inv.tensor2}
    The invariant tensor $K$ vanishes if and only if
    \[
    R(x,y)z=-\frac12 Q(x,y)z+\frac14 A(x,y)z.
    \]
\end{cor}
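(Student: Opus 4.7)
The plan is to read off both directions of the equivalence directly from equation \eqref{hRRSA}, which was derived in the proof of \propref{prop:inv.tensor2} and expresses
\[
K(x,y)z = R(x,y)z + \frac12 Q(x,y)z - \frac14 A(x,y)z.
\]
Since this identity is tensorial and holds for arbitrary vector fields $x,y,z$, the claim reduces to a pointwise algebraic rearrangement of the three curvature-type contributions.

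For the forward implication, I would substitute $K(x,y)z=0$ into \eqref{hRRSA} and isolate $R(x,y)z$, which immediately yields
\[
R(x,y)z = -\frac12 Q(x,y)z + \frac14 A(x,y)z,
\]
the identity asserted by the corollary. For the converse, I would insert the right-hand side of the corollary's identity in place of $R(x,y)z$ on the right of \eqref{hRRSA} and observe that the three terms cancel to give $K(x,y)z=0$ identically.

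There is no genuine obstacle here: the substantive content is already encoded in the expression for $K$ produced in the proof of \propref{prop:inv.tensor2}. The only thing to watch is the sign bookkeeping, ensuring that the term $+\frac12 Q$ in \eqref{hRRSA} becomes $-\frac12 Q$ after being moved across the equality and that $-\frac14 A$ becomes $+\frac14 A$, so that the coefficients match those stated in the corollary exactly.
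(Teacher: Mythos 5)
Your proposal is correct and coincides with the paper's argument: the corollary is stated there as an immediate consequence of \eqref{hRRSA}, obtained exactly by setting $K=0$ and rearranging the terms, with the sign changes you describe. Nothing further is needed.
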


Let us consider the average tensor $P$ of the curvature tensors $R$ and $\tR$, respectively, i.e.
$P(x,y)z=\frac12\{R(x,y)z+\tR(x,y)z\}$.
Then by \eqref{tRRS} we have
\begin{equation}\label{bR=RS}
    P(x,y)z=R(x,y)z+\frac12 Q(x,y)z.
\end{equation}

\begin{prop}\label{prop:inv.tensor}
    The average tensor $P$ of $R$ and $\tR$ is an invariant tensor under the twin interchange, i.e. $P(x,y)z=\widetilde{P}(x,y)z$.
\end{prop}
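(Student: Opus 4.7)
The plan is to exploit the fact that the twin interchange is an involution: swapping the roles of $(g,\n)$ and $(\tg,\tn)$ twice restores the original data. In particular, the curvature tensor $\tR$ of $\tn$ is precisely the image of $R$ under the twin interchange, and conversely the image of $\tR$ is $R$. Since $P(x,y)z = \frac12\{R(x,y)z+\tR(x,y)z\}$ is manifestly symmetric in $R$ and $\tR$, the invariance $\widetilde{P}=P$ follows immediately from this involutive symmetry. There is essentially no obstacle; the statement is almost tautological once one believes that the operation in question is an involution.

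To present a proof that makes explicit use of the machinery already assembled in the paper, I would instead start from the equivalent expression \eqref{bR=RS}, namely $P = R + \tfrac12 Q$. Applying the twin interchange produces
\[
\widetilde{P}(x,y)z = \tR(x,y)z + \tfrac12 \tQ(x,y)z.
\]
Now I would substitute \eqref{tRRS}, which reads $\tR = R + Q$, together with \lemref{lem:Q}, which gives $\tQ = -Q$. This yields
\[
\widetilde{P}(x,y)z = R(x,y)z + Q(x,y)z - \tfrac12 Q(x,y)z = R(x,y)z + \tfrac12 Q(x,y)z = P(x,y)z,
\]
which is the desired invariance. This second route is parallel to the proof of \propref{prop:inv.conn} (where $D = \tfrac12(\n+\tn)$ was shown invariant via $\tP=-\Phi$) and so fits naturally into the narrative of the section.

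Since neither route requires any new computation beyond the already established \eqref{tRRS}, \eqref{bR=RS}, and \lemref{lem:Q}, the proof is a brief assembly. The only point deserving a remark is the consistency check that the twin interchange genuinely swaps $R$ and $\tR$; this is built into the definitions of $\n$, $\tn$ via the Koszul formula applied to $g$ and $\tg$, so no extra verification is needed.
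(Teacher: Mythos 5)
Your second route is exactly the paper's own proof: it expands $\widetilde{P}=\tR+\frac12\tQ$ and substitutes \eqref{tRRS} and $\tQ=-Q$ from \lemref{lem:Q} to recover $R+\frac12 Q=P$, so the proposal is correct and takes essentially the same approach. The preliminary observation that $P$ is manifestly symmetric in $R$ and $\tR$ is a sound (and even shorter) remark given that the twin interchange is defined as the swap of $(\n,g)$ and $(\tn,\tg)$, but it adds nothing beyond what the displayed computation already establishes.
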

\begin{proof}
Using \eqref{tRRS}, \eqref{bR=RS} and \eqref{tP=-P},  we have the following
\begin{equation*}\label{tbR=bR}
\begin{array}{l}
    \widetilde{P}(x,y)z=\tR(x,y)z+\frac12 \tQ(x,y)z\\
    \phantom{\widetilde{P}(x,y)z}
    =R(x,y)z+Q(x,y)z-\frac12 Q(x,y)z\\
    \phantom{\widetilde{P}(x,y)z}
    =R(x,y)z+\frac12 Q(x,y)z=P(x,y)z.
\end{array}
\end{equation*}
\vskip-1em
\end{proof}

Immediately from \eqref{bR=RS} we obtain the following
\begin{cor}\label{cor:bR=0}
    The invariant tensor $P$ vanishes if and only if $R=-\frac12Q$ is valid.
\end{cor}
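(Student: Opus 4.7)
The claim is essentially a one-line rearrangement of the defining identity \eqref{bR=RS} for the average curvature tensor $P$, so the plan is very short. The idea is simply to read off both directions of the equivalence from
\[
P(x,y)z = R(x,y)z + \tfrac{1}{2} Q(x,y)z,
\]
which has already been established in the discussion preceding \propref{prop:inv.tensor}.

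First I would assume $P=0$. Substituting into \eqref{bR=RS} gives $R(x,y)z + \tfrac12 Q(x,y)z = 0$, which rearranges to $R(x,y)z = -\tfrac12 Q(x,y)z$ for all $x,y,z$. Conversely, if $R = -\tfrac12 Q$, then plugging this into \eqref{bR=RS} yields $P(x,y)z = -\tfrac12 Q(x,y)z + \tfrac12 Q(x,y)z = 0$. Since the equivalence is pointwise in $(x,y,z)$, no further manipulation is needed.

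There is no real obstacle here; the substance of the result lies entirely in \propref{prop:inv.tensor} and in the derivation of \eqref{bR=RS} from \eqref{tRRS}. The corollary simply repackages that formula as a vanishing criterion, in the same spirit as \coref{cor:inv.tensor2} does for the curvature $K$ of the average connection.
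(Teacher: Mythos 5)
Your proof is correct and matches the paper exactly: the paper states that the corollary follows ``immediately from \eqref{bR=RS}'', which is precisely the two-way substitution you carry out. Nothing further is needed.
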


By virtue of \eqref{hRRSA} and \eqref{bR=RS}, we have the following relation between the invariant tensors $K$, $P$ and $A$
\begin{equation}\label{wRbRPhi}
    K(x,y)z=P(x,y)z
    -\frac14 A(x,y)z.
\end{equation}

\begin{thm}\label{thm:inv.tensors}
    Any linear combination of the invariant tensors $P$ and $K$ is an invariant tensor under the twin interchange.
\end{thm}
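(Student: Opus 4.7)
The plan is straightforward, because the statement is essentially a linearity observation on top of \propref{prop:inv.tensor} and \propref{prop:inv.tensor2}. First I would note that the twin interchange, defined by swapping the pair $(g,\n)$ with $(\tg,\tn)$, acts $\mathbb{R}$-linearly on every space of tensor fields produced by these data: for any tensors $T_1,T_2$ of the same type and any $\alpha,\beta\in\mathbb{R}$, one has
\[
\widetilde{\alpha T_1+\beta T_2}=\alpha\widetilde{T}_1+\beta\widetilde{T}_2,
\]
simply because the substitution $(g,\n)\leftrightarrow(\tg,\tn)$ is applied to each summand independently.

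Next I would apply this linearity to the already-established relations $\widetilde{P}=P$ from \propref{prop:inv.tensor} and $\widetilde{K}=K$ from \propref{prop:inv.tensor2}, which immediately yields
\[
\widetilde{\alpha P+\beta K}=\alpha\widetilde{P}+\beta\widetilde{K}=\alpha P+\beta K
\]
for all $\alpha,\beta\in\mathbb{R}$. There is no real obstacle: the theorem merely records that the invariant tensors under the twin interchange form an $\mathbb{R}$-linear subspace of the ambient tensor space, and that $P$ and $K$ both lie in it.

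As a side remark worth including, relation \eqref{wRbRPhi} can be rewritten as $A=4(P-K)$, so the tensor $A$ lies in the span of $P$ and $K$; this gives a second derivation of \lemref{lem:A} purely from the invariance of $P$ and $K$ combined with linearity, and it shows that the pair $\{P,K\}$ already captures the three natural curvature-type invariant tensors $P$, $K$, $A$ produced in this section.
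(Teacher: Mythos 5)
Your argument is correct and is essentially the paper's own proof: the paper likewise derives the theorem directly from Proposition~\ref{prop:inv.tensor2} and Proposition~\ref{prop:inv.tensor}, with the linearity of the twin interchange left implicit. Your side remark that \eqref{wRbRPhi} gives $A=4(P-K)$, recovering Lemma~\ref{lem:A}, is a correct and harmless addition.
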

\begin{proof}
It follows from \propref{prop:inv.tensor2} and \propref{prop:inv.tensor}.
\end{proof}

\subsection{Invariant connection and invariant tensors on the manifolds in the main class}

Now, we consider an arbitrary manifold $(M,J,g)$ belonging to the basic class $\W_1$. This class is known as the main class in the classification in \cite{GaBo}, because it is the only class where the fundamental tensor $F$ and the potential $\Phi$ are expressed explicitly by the metric. Then, we have the form of $F$ and $\Phi$ in \eqref{class} and \eqref{class2}, respectively.
Taking into account \eqref{tFF}, \eqref{class} and \eqref{F-prop},
we obtain the following form of  $F$ 
under the twin interchange
\begin{equation*}\label{W1:tF}
\begin{array}{l}
  \tF(x,y,z)=-\frac{1}{2n}\bigl\{
  g(x,y)\ta(Jz)+g(x,z)\ta(Jy)\\
  \phantom{\tF(x,y,z)=-\frac{1}{2n}\bigl\{}
  -g(x,Jy)\ta(z)-g(x,Jz)\ta(y)\bigr\}.
\end{array}
\end{equation*}
Therefore, we get the following relation for a $\W_1$-manifold
\begin{equation*}\label{W1:tFF}
  \tF(x,y,z)=F(Jx,y,z).
\end{equation*}

%
%

The invariant connection on a $\W_1$-manifold has the following form, applying the definition from \eqref{class2} in \eqref{hn=nP},
\begin{equation*}\label{W1:inv-n}
D_x y=\n_x y+\frac{1}{4n}\left\{g(x,y)f^{\sharp}+g(x,Jy)Jf^{\sharp}\right\},
\end{equation*}
where $f^{\sharp}$ is the dual vector of the 1-form $f$ regarding $g$, i.e. $f(z)=g(f^{\sharp},z)$.

The presence of the first equality in \eqref{class2}, the explicit expression of $\Phi$ in terms of $g$ for the case of a $\W_1$-manifold,
gives us a chance to find a more concrete form of $Q$ and $A$ defined by \eqref{Q} and  \eqref{A}, respectively. This expression gives results in the corresponding relations between $R$ and $\tR$, $K$, $P$, given in \eqref{tRRS}, \eqref{hRRSA}, \eqref{bR=RS}, respectively. A relation $R$ and $\tR$ for a $\W_1$-manifold is given in \cite{MT06th} but using $\ta$.

\begin{prop}\label{prop:W1_R}
If $(M,J,g)$ is an almost Norden manifold belonging to the class $\W_1$, then the tensors $Q$ and $A$ have the following form, respectively:
\begin{equation*}\label{W1:Q}
\begin{array}{l}
Q(x,y)z=\frac{1}{2n}\bigl\{g(y,z)p(x)+g(y,Jz)Jp(x)\\
\phantom{Q(x,y)z=\frac{1}{2n}}
-g(x,z)p(y)-g(x,Jz)Jp(y)\bigr\},
\end{array}
\end{equation*}
\begin{equation*}\label{W1:A}
\begin{array}{l}
A(x,y)z=\frac{1}{4n^2}\bigl\{g(y,z)h(x)+g(y,Jz)h(Jx)\\
\phantom{A(x,y)z=\frac{1}{4n^2}}
-g(x,z)h(y)-g(x,Jz)h(Jy)\bigr\},
\end{array}
\end{equation*}
where $p(x)=\n_x f^{\sharp}+\frac{1}{2n}\{f(x)f^{\sharp}-f(f^{\sharp})x-f(Jf^{\sharp})Jx\}$ and $h(x)=f(x)f^{\sharp}+f(Jx)Jf^{\sharp}$.
\end{prop}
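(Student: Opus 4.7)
The plan is a direct verification by substituting the explicit $\W_1$-form of the potential into the definitions \eqref{Q} and \eqref{A}. Raising the last index via \eqref{Phi03} in the first line of \eqref{class2} gives the vector-valued expression $\Phi(x,y)=\frac{1}{2n}\bigl\{g(x,y)f^{\sharp}+g(x,Jy)Jf^{\sharp}\bigr\}$. For $A(x,y)z=\Phi(x,\Phi(y,z))-\Phi(y,\Phi(x,z))$ I would exploit that $\Phi(y,z)$ is a scalar combination of the two vectors $f^{\sharp}$ and $Jf^{\sharp}$, so bilinearity reduces the computation to the evaluations $\Phi(x,f^{\sharp})$ and $\Phi(x,Jf^{\sharp})$. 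Using $g(x,Jw)=g(Jx,w)$ and $J^{2}=-I$, these come out to $\frac{1}{2n}h(x)$ and $\frac{1}{2n}h(Jx)$ respectively; the second expression matches $h(Jx)=f(Jx)f^{\sharp}-f(x)Jf^{\sharp}$ directly from the definition of $h$. Substituting and antisymmetrising in $x,y$ yields the stated formula for $A$.

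For $Q$, I would split as $Q(x,y)z=[(\n_x\Phi)(y,z)-(\n_y\Phi)(x,z)]+A(x,y)z$. Expanding the covariant derivative with $\n g=0$, the contributions involving $\n_x y$ and $\n_x z$ cancel the corresponding $\Phi(\n_x y,z)$ and $\Phi(y,\n_x z)$ terms, leaving
\[
(\n_x\Phi)(y,z)=\frac{1}{2n}\bigl\{g(y,z)\n_x f^{\sharp}+g(y,Jz)J\n_x f^{\sharp}+F(x,z,y)Jf^{\sharp}+g(y,Jz)(\n_x J)f^{\sharp}\bigr\},
\]
after using $g(y,\n_x(Jz))-g(y,J\n_x z)=g(y,(\n_x J)z)=F(x,z,y)$ and $\n_x(Jf^{\sharp})=(\n_x J)f^{\sharp}+J\n_x f^{\sharp}$.

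Next I would plug in the $\W_1$-form of $F$ from \eqref{class}, together with the identity $\ta(z)=f(Jz)$ (a consequence of \eqref{ta*taJ} and \eqref{fta}), to expand both the antisymmetric combination $F(x,z,y)-F(y,z,x)$ and the vector $(\n_x J)f^{\sharp}$, which is determined by $g((\n_x J)f^{\sharp},w)=F(x,f^{\sharp},w)$. The summands of $F(x,z,y)-F(y,z,x)$ that are symmetric in $x,y$ cancel, and the remaining pieces, together with $(\n_x J)f^{\sharp}$, line up in the basis $\{f^{\sharp},Jf^{\sharp},x,Jx\}$ with scalar coefficients built from $f(x),f(Jx),f(f^{\sharp}),f(Jf^{\sharp})$ and the metric factors $g(x,z),g(x,Jz),g(y,z),g(y,Jz)$. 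Reinserting $A(x,y)z$ and collecting against the template $\frac{1}{2n}\{g(y,z)\,p(x)+g(y,Jz)\,Jp(x)-g(x,z)\,p(y)-g(x,Jz)\,Jp(y)\}$, the coefficient of $g(y,z)$ reads off as exactly $p(x)=\n_x f^{\sharp}+\frac{1}{2n}[f(x)f^{\sharp}-f(f^{\sharp})x-f(Jf^{\sharp})Jx]$. The main obstacle is the accompanying consistency check that the coefficient of $g(y,Jz)$ really equals $Jp(x)$; this is a mechanical but attentive matching of four scalar-vector pairs in which the $A$-contribution is indispensable to balance the parts inherited from $(\n_x J)f^{\sharp}$. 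No substantial geometric difficulty is expected beyond this bookkeeping.
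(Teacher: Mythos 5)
Your proposal is correct and is essentially the paper's own argument: the paper's proof consists of the single sentence that the formulae ``follow by direct computations, using \eqref{class}, \eqref{class2}, \eqref{Q} and \eqref{A}'', and your substitution of the raised-index $\W_1$-form of $\Phi$ into \eqref{A} and \eqref{Q} is exactly that computation, carried out in more detail (your intermediate evaluations $\Phi(x,f^{\sharp})=\frac{1}{2n}h(x)$ and $\Phi(x,Jf^{\sharp})=\frac{1}{2n}h(Jx)$ check out, as does the expansion of $(\n_x\Phi)(y,z)$). No discrepancy with the paper's approach.
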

\begin{proof}
The formulae follow by direct computations, using \eqref{class}, \eqref{class2}, \eqref{Q} and \eqref{A}.
\end{proof}


\section{Lie group as a manifold from the main class and the invariant connection and the invariant tensors on it}
\label{sec_3}

In this section we consider an example of a 4-dimensional Lie
group as a $\W_1$-manifold given in \cite{MT06ex}.

Let $L$ be a 4-dimensional real connected Lie group, and
$\mathfrak{l}$ be its Lie algebra with a basis
$\{X_{1},X_{2},X_{3},X_{4}\}$.

We introduce an almost complex structure
$J$ and a Norden metric by
\begin{equation}\label{Jdim4}
\begin{array}{llll}
JX_{1}=X_{3}, \quad & JX_{2}=X_{4}, \quad & JX_{3}=-X_{1},
\quad &
JX_{4}=-X_{2},
\end{array}
\end{equation}
\begin{equation}\label{g}
\begin{array}{c}
  g(X_1,X_1)=g(X_2,X_2)=-g(X_3,X_3)=-g(X_4,X_4)=1, \\
  g(X_i,X_j)=0,\; i\neq j.
\end{array}
\end{equation}
Then, the associated Norden metric $\tg$ is determined by its non-zero components
\begin{equation}\label{tg}
\begin{array}{c}
  \tg(X_1,X_3)=\tg(X_2,X_4)=-1.
\end{array}
\end{equation}

Let us consider $(L,J,g)$ with the Lie algebra $\mathfrak{l}$
determined by the following nonzero commutators:
\begin{equation}\label{lie-w1-2}
\begin{array}{l}
\left
[X_{1},X_{4}\right]=[X_{2},X_{3}]=\lm_{1}X_{1}+\lm_{2}X_{2}+\lm_{3}X_{3}+\lm_{4}X_{4},\\
\left[X_{1},X_{3}\right]=[X_{4},X_{2}]=\lm_{2}X_{1}-\lm_{1}X_{2}+\lm_{4}X_{3}-\lm_{3}X_{4},
\end{array}
\end{equation}
where $\lm_i\in\R$ ($i=1,2,3,4$).
Obviously, $[J X_i,J
X_j]=[X_i,X_j]$ holds, i.e. $J$ is an Abelian structure for
$\mathfrak{l}$.

In \cite{MT06ex}, it is proved that $(L,J,g)$ is a $\W_1$-manifold.
Since the class $\W_1$ is invariant under the twin interchange, it follows that $(L,J,\tg)$ belongs to  $\W_1$, too.

\begin{thm}\label{thm:W10-W}
Let $(L,J,g)$ and $(L,J,\tg)$ be the pair of $\W_1$-manifolds, determined by
\eqref{Jdim4}--\eqref{lie-w1-2}. Then both the manifolds:
\begin{enumerate}\renewcommand{\labelenumi}{(\roman{enumi})}
    \item belong to the class of the locally conformal
        K\"ahler-Norden manifolds if and only if
        \begin{equation}\label{lm}
        \lm_1^2-\lm_2^2+\lm_3^2-\lm_4^2=\lm_1\lm_2+\lm_3\lm_4=0;
        \end{equation}
    \item   are 
            locally conformally flat by usual conformal transformations
            and the curvature tensors $R$ and $\tR$ have the following form, respectively:
        \begin{equation}\label{Rform3}
            \begin{array}{l}
                R=-\frac{1}{2}g\owedge\rho+\frac{\tau}{12}g\owedge g,\quad
                              \widetilde{R}=  -\frac{1}{2}\tg\owedge\widetilde{\rho}
                                +\frac{\widetilde{\tau}}{12}\tg\owedge\tg.
            \end{array}
        \end{equation}
    \item   are scalar flat and isotropic K\"ahlerian
            if and only if the following conditions are satisfied, respectively:
        \begin{equation*}\label{llll2}
        \lm_{1}^{2}+\lm_{2}^{2}-\lm_{3}^{2}-\lm_{4}^{2}=0,\quad \lm_{1}\lm_{3}+\lm_{2}\lm_{4}=0.
        \end{equation*}
\end{enumerate}
\end{thm}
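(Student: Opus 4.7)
The plan is to carry out all three parts of the theorem by direct computation on the basis $\{X_1,\dots,X_4\}$. Starting from the Koszul formula applied to \eqref{lie-w1-2} and \eqref{g}: since $g$ and the $X_i$ are left-invariant we have $X_i\,g(X_j,X_k)=0$, so Koszul collapses to a purely algebraic expression in the four parameters $\lambda_a$. From the resulting table of $\n_{X_i}X_j$ I would compute $F(X_i,X_j,X_k)=g((\n_{X_i}J)X_j,X_k)$ using \eqref{Jdim4}, reconfirming $(L,J,g)\in\W_1$ as in \cite{MT06ex} and reading off the Lee form $\ta$ from \eqref{ta}; by \eqref{fta} and \lemref{lem:f} this yields $f=\ta^*$ explicitly in the $\lambda_a$.

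For part (i), the locally conformal K\"ahler--Norden subclass of $\W_1$ is singled out inside $\W_1$ by $\D\ta=0$, so the plan is to compute $\D\ta(X_i,X_j)=-\ta([X_i,X_j])$ on each pair of basis fields and set the non-trivial components to zero. Because only the four commutators displayed in \eqref{lie-w1-2} are non-zero and they are built from the two coefficient vectors $(\lm_1,\lm_2,\lm_3,\lm_4)$ and $(\lm_2,-\lm_1,\lm_4,-\lm_3)$, I expect the closedness conditions to collapse to exactly the two quadratic relations in \eqref{lm}; by \thmref{thm:inv.cl} the same conclusion then applies verbatim to $(L,J,\tg)$.

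For part (ii), I would compute $R(X_i,X_j)X_k$ from the definition, then $\rho$ and $\tau$ by the $g^{ij}$-traces, and substitute everything into \eqref{W} with $n=2$. The key step is to verify that $W$ vanishes identically on the basis, which by the characterisation following \eqref{W} gives the local conformal flatness and, after rearranging \eqref{W}, the first identity in \eqref{Rform3}. For $\tR$ I would run the analogous computation with $\tn$ and $\tg$ using \eqref{tg}; since $(L,J,\tg)$ is again a $\W_1$-manifold with the same underlying Lie algebra, I expect $\widetilde W=0$ by the same cancellations, yielding the second identity in \eqref{Rform3}.

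For part (iii), the scalar flatness $\tau=0$ is read off directly from the expression for $\tau$ assembled in (ii) and reduces to a single quadratic relation in the $\lm_a$. The isotropic K\"ahler condition $\nJ=0$ is computed from \eqref{snorm}, whose right-hand side on this basis reduces to another quadratic form in the $\lm_a$ once $F$ has been tabulated. The main obstacle throughout is bookkeeping: the number of non-vanishing components of $\n$, $F$, $R$, $\rho$ grows quickly. To keep things manageable I would first recast $\n_{X_i}X_j$ as a compact matrix in the two $\lm$-vectors above, exploiting the symmetries $[X_1,X_4]=[X_2,X_3]$ and $[X_1,X_3]=[X_4,X_2]$ from \eqref{lie-w1-2}; this makes the symmetries of $F$ and of the subsequent curvature tensors transparent and substantially shortens the derivations of (i)--(iii).
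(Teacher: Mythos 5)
Your plan follows the paper's proof essentially step for step: Koszul formula on the left-invariant frame to get $\n_{X_i}X_j$ algebraically in the $\lm_a$, then $F$, the Lee forms, $R$, $\rho$, $\tau$, the Weyl tensor, and the square norm \eqref{snorm}, with the three parts read off from these tables. Two points in part (i) need tightening, though. First, the criterion you use is incomplete: by \cite{GaGrMi} a $\W_1$-manifold is locally conformal K\"ahler--Norden if and only if \emph{both} Lee forms $\ta$ and $\ta^*$ are closed, not just $\ta$; since $\ta^*=-\ta\circ J$ by \eqref{ta*taJ}, the condition $\D\ta^*=0$ is a priori independent of $\D\ta=0$. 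In this example one computes $\D\ta^*=0$ identically, so your two quadratic relations \eqref{lm} are indeed the full answer, but that vanishing has to be verified rather than omitted. Second, invoking \thmref{thm:inv.cl} only tells you that $(L,J,\tg)$ is again a $\W_1$-manifold; it does not by itself transfer the conditions \eqref{lm}. What the paper actually uses is \lemref{lem:tata*}, the invariance of the Lee forms under the twin interchange, which gives $\widetilde{\ta}=\ta$ and $\widetilde{\ta}^*=\ta^*$ and hence literally the same closedness conditions for $(L,J,\tg)$. With those two repairs, and with the computation of $\tR$, $\widetilde{\rho}$, $\widetilde{\tau}$, $\widetilde{W}$ carried out explicitly (the components of $\tR$ differ substantially from those of $R$, so ``the same cancellations'' is an expectation that must be checked), your argument coincides with the one in the paper.
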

\begin{proof}
According to \eqref{twin}, \eqref{Jdim4}, \eqref{g}, \eqref{lie-w1-2} and the Koszul equality for $g$, $\n$ and $\tg$, $\tn$, we obtain the following nonzero components of $\n$ and $\tn$:
\begin{equation}\label{nabla3}
\begin{array}{ll}
\n_{X_{1}}X_{1} = \n_{X_{2}}X_{2} = \tn_{X_{3}}X_{3} = \tn_{X_{4}}X_{4} =\lm_{2}X_{3} +\lm_{1}X_{4},
\\
\n_{X_{1}}X_{3} = \n_{X_{4}}X_{2} =-\tn_{X_{2}}X_{4} = -\tn_{X_{3}}X_{1} = \lm_{2}X_{1} -\lm_{3}X_{4},
\\
\n_{X_{1}}X_{4} = -\n_{X_{3}}X_{2} =\tn_{X_{1}}X_{4} = -\tn_{X_{3}}X_{2} = \lm_{1}X_{1} +\lm_{3}X_{3},
\\
\n_{X_{2}}X_{3} =- \n_{X_{4}}X_{1} =\tn_{X_{2}}X_{3} =- \tn_{X_{4}}X_{1} =\lm_{2}X_{2} +\lm_{4}X_{4},
\\
\n_{X_{2}}X_{4} = \n_{X_{3}}X_{1} =-\tn_{X_{1}}X_{3} = -\tn_{X_{4}}X_{2} = \lm_{1}X_{2} -\lm_{4}X_{3},
\\
\n_{X_{3}}X_{3} = \n_{X_{4}}X_{4} =\tn_{X_{1}}X_{1} = \tn_{X_{2}}X_{2} = -\lm_{4}X_{1} -\lm_{3}X_{2}.
\end{array}
\end{equation}

The components of $\n J$ and $\tn J$ follow from \eqref{nabla3} and \eqref{Jdim4}. Then, using \eqref{g}, \eqref{tg} and \eqref{F}, we
get the following nonzero components
$F_{ijk}=F(X_{i},X_{j},X_{k})$ and $\tF_{ijk}=\tF(X_{i},X_{j},X_{k})=\tg((\tn_{X_i}J)X_j,X_k)$
of $F$ and $\tF$, respectively:
\begin{equation}\label{lambdi}
\begin{array}{l}
\lm_{1}=F_{112}=F_{121}=F_{134}=F_{143}=\frac{1}{2}F_{222}\\
\phantom{\lm_{1}}=\frac{1}{2}F_{244}=F_{314}
=-F_{323}=-F_{332}=F_{341},\\
\lm_{2}=\frac{1}{2}F_{111}=\frac{1}{2}F_{133}=F_{212}=F_{221}=F_{234}\\
\phantom{\lm_2}=F_{243}=-F_{414}=F_{423}=F_{432}
=-F_{441},\\
\lm_{3}=F_{114}=-F_{123}=-F_{132}=F_{141}=-F_{312}\\
\phantom{\lm_{3}}=-F_{321}=-F_{334}=-F_{343}=-\frac{1}{2}F_{422}
=-\frac{1}{2}F_{444},\\
\lm_{4}=-F_{214}=F_{223}=F_{232}=-F_{241}=-\frac{1}{2}F_{311}\\
\phantom{\lm_{4}}=-\frac{1}{2}F_{333}=-F_{412}=-F_{421}=-F_{434}
=-F_{443};
\end{array}
\end{equation}
\begin{equation}\label{lambdi-tF}
\begin{array}{l}
\lm_{1}=\tF_{114}=-\tF_{123}=-\tF_{132}=\tF_{141}=-\tF_{312}=-\tF_{321}\\
\phantom{\lm_{1}}
=-\tF_{334}=-\tF_{343}
=-\frac12\tF_{422}=-\frac12\tF_{444},\\
\lm_{2}=-\tF_{214}=\tF_{223}=\tF_{232}=-\tF_{241}=-\frac12\tF_{311}\\
\phantom{\lm_2}
=-\frac12\tF_{333}=-\tF_{412}=-\tF_{421}=-\tF_{434}
=-\tF_{443},\\
\lm_{3}=-\tF_{112}=-\tF_{121}
=-\tF_{134}=-\tF_{143}=-\frac12\tF_{222}\\
\phantom{\lm_{3}}
=-\frac12\tF_{244}=-\tF_{314}=\tF_{323}=\tF_{332}
=-\tF_{341},\\
\lm_{4}=-\frac12\tF_{111}=-\frac12\tF_{133}=-\tF_{212}=-\tF_{221}=-\tF_{234}\\
\phantom{\lm_{4}}
=-\tF_{243}=\tF_{414}=-\tF_{423}=-\tF_{432}
=\tF_{441}.
\end{array}
\end{equation}

Applying \eqref{snorm} for the components in \eqref{lambdi} and \eqref{lambdi-tF}, we obtain the square norms of $\n J$ and $\tn J$:
\begin{equation}\label{nJ}
\nJ=16\left(\lm_1^2+\lm_2^2-\lm_3^2-\lm_4^2\right),\quad \tnJ=\left(\lm_1\lm_3+\lm_2\lm_4\right).
\end{equation}

Let us consider the conformal transformations $\widebar{g}=e^{2u}(\cos{2v}\ g+\sin{2v}\ \tg)$ of the metric $g$, where $u$ and $v$ are differentiable functions on the manifold.
Then, the associated metric $\tg$ has the following image $\widebar{\tg}=e^{2u}(\cos{2v}\ \tg-\sin{2v}\ g)$.
If $v=0$, we obtain the usual conformal transformation.
Let us remark that the conformal transformation for $u=0$ and $v=\pi/2$ maps the pair $(g,\tg)$ into $(\tg,-g)$.

According to \cite{GaGrMi}, a $\W_1$-manifold is locally conformal equivalent to a K\"ahler-Norden manifold if and only if its Lee forms $\ta$ and $\ta^*$ are closed. Moreover, the used conformal transformations are such that the 1-forms $\D u\circ J$ and $\D v\circ J$ are closed.

Taking into account \lemref{lem:tata*}, we have
$\ta_k=\widetilde{\ta}_k$ and
$\ta^*_k=\widetilde{\ta}^*_k$ for the corresponding components with respect to $X_k$.
Furthermore, the same situation is for  $\D{\ta}=\D\widetilde{\ta}$ and $\D{\ta}^*=\D\widetilde{\ta}^*$.
By \eqref{ta}, \eqref{ta*taJ} and \eqref{lambdi}, we obtain
$\ta_k$ and $\ta^*_k$ and thus we get:
\begin{equation}\label{theta123}
\begin{array}{ll}
\ta_{1}=\ta^*_{3}=\widetilde{\ta}_{1}=\widetilde{\ta}^*_{3}=4\lm_{2}, \quad &
\ta_{3}=-\ta^*_{1}=\widetilde{\ta}_{3}=-\widetilde{\ta}^*_{1}=4\lm_{4}, \quad \\
\ta_{2}=\ta^*_{4}=\widetilde{\ta}_{2}=\widetilde{\ta}^*_{4}=4\lm_{1}, \quad &
\ta_{4}=-\ta^*_{2}=\widetilde{\ta}_{4}=-\widetilde{\ta}^*_{2}=4\lm_{3}.
\end{array}
\end{equation}
Using \eqref{lie-w1-2} and \eqref{theta123}, we compute the components of
$\D\ta$ and $\D\ta^*$ with respect to the basis
$\{X_{1},X_{2},X_{3},X_{4}\}$.
We obtain that $\D\ta^*=\D\widetilde{\ta}^*=0$ and the nonzero components of $\D\ta=\D\widetilde{\ta}$ are
\begin{equation*}\label{dta}
\begin{array}{l}
  \D\ta_{13}=\D\ta_{42}=\D\widetilde{\ta}_{13}=\D\widetilde{\ta}_{42}=4(\lm_1^2-\lm_2^2+\lm_3^2-\lm_4^2),\\
  \D\ta_{14}=\D\ta_{23}=\D\widetilde{\ta}_{14}=\D\widetilde{\ta}_{23}=-8(\lm_1\lm_2+\lm_3\lm_4).
\end{array}
\end{equation*}

Therefore $(L,J,g)$ and $(L,J,\tg)$ and
locally conformal K\"ahler-Norden manifolds if and only if conditions
\eqref{lm} are valid. Then, the statement (i) holds.

By virtue of \eqref{g}, \eqref{lie-w1-2} and \eqref{nabla3}, we get  $R_{ijkl}=R(X_{i},X_{j},X_{k},X_{l})$ and $\tR_{ijkl}=\tR(X_{i},\allowbreak{}X_{j},\allowbreak{}X_{k},X_{l})$, the basic components of the curvature tensors
for $\n$ and $\tn$. The nonzero ones of them are determined by \eqref{curv} and the following:
\begin{equation}\label{R3}
\begin{array}{l} 
\begin{array}{lll}
R_{1221} = \lm_{1}^{2} + \lm_{2}^{2}, \qquad & %
R_{1331} = \lm_{4}^{2} - \lm_{2}^{2}, \qquad & %
R_{1441} = \lm_{4}^{2} - \lm_{1}^{2},
\\
R_{2332} = \lm_{3}^{2} - \lm_{2}^{2},\qquad & %
R_{2442} = \lm_{3}^{2} - \lm_{1}^{2}, \qquad & %
R_{3443} = -\lm_{3}^{2} - \lm_{4}^{2},\\
\end{array}
\\
\begin{array}{ll}
R_{1341}=R_{2342} = -\lm_{1}\lm_{2}, \qquad & %
R_{2132}=-R_{4134} = -\lm_{1}\lm_{3},
\\
R_{1231}=-R_{4234} = \lm_{1}\lm_{4}, \qquad & %
R_{2142}=-R_{3143} = \lm_{2}\lm_{3},
\\
R_{1241}=-R_{3243} = -\lm_{2}\lm_{4}, \qquad & %
R_{3123}=R_{4124} = \lm_{3}\lm_{4};
\end{array}
\end{array}
\end{equation}
\begin{equation}\label{tR3}
\begin{array}{l}
\begin{array}{lll}
\tR_{1241} = -\lm_{3}^{2}, \quad & %
\tR_{2132} = -\lm_{4}^{2},
\\
\tR_{1331} = 2\lm_{2}\lm_{4},\quad & %
\tR_{2442} = 2\lm_{1}\lm_{3},
\end{array}
\\
\begin{array}{ll}
\tR_{3143}=\tR_{4234} = -\lm_{1}\lm_{2},
\\
\tR_{1231}=\tR_{2142} = -\lm_{3}\lm_{4},
\end{array}
\end{array}
\begin{array}{l}
\tR_{3243} = -\lm_{1}^{2}, \qquad  %
\tR_{4134} = -\lm_{2}^{2},
\\
\tR_{1234} = \tR_{2341} = \lm_{1}\lm_{3} + \lm_{2}\lm_{4},
\\
\tR_{1341}=\tR_{4124} = \lm_{2}\lm_{3},
\\
\tR_{2342}=\tR_{3123} = \lm_{1}\lm_{4}.
\end{array}
\end{equation}
Therefore, the components of the Ricci tensors
and the values of the scalar curvatures
for $\n$ and $\tn$ are:
\begin{equation}\label{Ricci3}
\begin{array}{c}
\begin{array}{ll}
\rho_{11}=2\big( \lm_{1}^{2} + \lm_{2}^{2} - \lm_{4}^{2} \big), \qquad &
\rho_{22}=2\big( \lm_{1}^{2} + \lm_{2}^{2} - \lm_{3}^{2} \big), \\
\rho_{33}=2\big( \lm_{4}^{2} + \lm_{3}^{2} - \lm_{2}^{2} \big), \qquad &
\rho_{44}=2\big( \lm_{4}^{2} + \lm_{3}^{2} - \lm_{1}^{2} \big),
\\
\widetilde{\rho}_{11}=2\lm_{3}^{2}, \qquad\;
\widetilde{\rho}_{22}=2\lm_{4}^{2}, \qquad &
\widetilde{\rho}_{33}=2\lm_{1}^{2},  \qquad \;
\widetilde{\rho}_{44}=2\lm_{2}^{2},
\\
\widetilde{\rho}_{13}=2\big(\lm_{1}\lm_{3}+2\lm_{2}\lm_{4}\big),\qquad &
\widetilde{\rho}_{24}=2\big(2\lm_{1}\lm_{3}+\lm_{2}\lm_{4}\big),
\end{array}
\\
\begin{array}{lll}
\rho_{12}=\widetilde{\rho}_{12}=-2\lm_{3}\lm_{4}, \quad & %
\rho_{23}=\widetilde{\rho}_{23}=2\lm_{1}\lm_{4}, \quad &
\rho_{13}=-2\lm_{1}\lm_{3},
\\
\rho_{34}=\widetilde{\rho}_{34}=-2\lm_{1}\lm_{2}, \quad & %
\rho_{14}=\widetilde{\rho}_{14}=2\lm_{2}\lm_{3}, \quad &
\rho_{24}=-2\lm_{2}\lm_{4},
\end{array}
\\
\begin{array}{ll}
\tau=6\big(\lm_{1}^{2}+\lm_{2}^{2}-\lm_{3}^{2}-\lm_{4}^{2}\big),\qquad &
\widetilde{\tau}=-12\big(\lm_{1}\lm_{3}+\lm_{2}\lm_{4}\big).
\end{array}
\end{array}
\end{equation}

Applying \eqref{W} for the corresponding quantities of $\n$ and $\tn$, we compute that the Weyl tensors $W$ and $\widetilde{W}$ vanish, respectively. Then, we obtain the identities in \eqref{Rform3}.
Furthermore, when the Weyl tensor vanishes then the corresponding manifold is conformal equivalent to a flat manifold by a usual conformal transformation. This completes the proof of (ii).

The truthfulness of (iii) follows immediately from the equations in the last line of \eqref{Ricci3} and the values of the square norms in \eqref{nJ}. %
\end{proof}

Let us remark that the results in the latter theorem with respect to $\n$ 
are given in \cite{MT06ex} besides (i), where it is shown a particular case of conditions \eqref{lm}.

\subsection{The invariant connection and invariant tensors under the twin interchange }

We compute the basic components $P_{ijk}=P(X_i,X_j)X_k$ of the invariant tensor $P$, using \eqref{R3}, \eqref{tR3} and that this tensor is the average tensor of $R$ and $\tR$, and get the components $P_{ijkl}=g\left(P(X_i,X_j)X_k,X_l\right)$:
\begin{subequations}\label{bR3}
\begin{equation}
\begin{array}{l}
\begin{array}{ll}
\frac12\lm_{1}^{2} = P_{3421} = -P_{2341},\quad &
\frac12\lm_{2}^{2} = -P_{3412} = -P_{1432},\\
\frac12\lm_{3}^{2} = -P_{1243} = -P_{1423},\quad &
\frac12\lm_{4}^{2} = P_{1234} = -P_{2314}, \\
\frac12\bigl(\lm_{2}^{2}-\lm_{4}^{2}\bigr) = P_{1313} = -P_{1331}, \quad &
\frac12\bigl(\lm_{1}^{2}-\lm_{3}^{2}\bigr) = P_{2424} = -P_{2442}, \\
\frac12\bigl(\lm_{1}^{2}+\lm_{2}^{2}+\lm_{3}^{2}\bigr) = -P_{1212}, \quad &
\frac12\bigl(\lm_{1}^{2}+\lm_{2}^{2}+\lm_{4}^{2}\bigr) = P_{1221},\\
\frac12\bigl(\lm_{1}^{2}+\lm_{3}^{2}-\lm_{4}^{2}\bigr) = P_{1414}, \quad &
\frac12\bigl(\lm_{1}^{2}-\lm_{2}^{2}-\lm_{4}^{2}\bigr) = -P_{1441},\\
\frac12\bigl(\lm_{2}^{2}-\lm_{3}^{2}+\lm_{4}^{2}\bigr) = P_{2323}, \quad &
\frac12\bigl(\lm_{1}^{2}-\lm_{2}^{2}+\lm_{3}^{2}\bigr) = P_{2332},\\
\frac12\bigl(\lm_{1}^{2}+\lm_{3}^{2}+\lm_{4}^{2}\bigr) = P_{3434}, \quad &
\frac12\bigl(\lm_{2}^{2}+\lm_{3}^{2}+\lm_{4}^{2}\bigr) = -P_{3443},
\end{array}
\\
\begin{array}{l}
\frac12\bigl(\lm_{1}\lm_{2}+\lm_{3}\lm_{4}\bigr) = P_{1234} = P_{1332} = P_{2423} = P_{2441}, \\
\frac12\bigl(\lm_{2}\lm_{3}-\lm_{1}\lm_{4}\bigr) = P_{1312} = -P_{1334} = -P_{2421} = P_{2443},\\
\frac12\lm_{1}\lm_{2} = -\frac12P_{1341} = P_{1413} =-P_{1431} =P_{2324} =-P_{2342}\\
\phantom{\frac12\lm_{1}\lm_{2} }
=-\frac12P_{2432} =P_{3411} =-P_{3422} =P_{3433} =-P_{3444},
\\
\frac12\lm_{3}\lm_{4} = -P_{1211} = P_{1222} =-P_{1233} =P_{1244} =-\frac12P_{1323}\\
\phantom{\frac12\lm_{3}\lm_{4} }
=-P_{1424} =P_{1442} =-P_{2313} =P_{2331} =-\frac12P_{2414},\\
\frac12\lm_{1}\lm_{3} = -P_{1223} = P_{1241} =P_{1421} =P_{1443} =P_{2321}\\
\phantom{\frac12\lm_{1}\lm_{3} }
=P_{2343} =\frac12P_{2422} =\frac12P_{2444} =-P_{3423} = -P_{3441},
\\
\frac12\lm_{2}\lm_{4} = P_{1214} = -P_{1232} =\frac12P_{1311} =\frac12P_{1333} =P_{1412}\\
\phantom{\frac12\lm_{2}\lm_{4} }
=P_{1434} =P_{2312} =P_{2334} =P_{3414} = -P_{3432},\\
\frac12\lm_{1}\lm_{4} = -P_{1213} = P_{1231} = \frac12P_{1321} =P_{2311} =P_{2322} \\
\phantom{\frac12\lm_{1}\lm_{4} }
=P_{2333}=P_{2344} =\frac12P_{2434} =P_{3424} =-P_{3442},
\end{array}
\end{array}
\end{equation}
\begin{equation}
\begin{array}{l}
\begin{array}{l}
\frac12\lm_{2}\lm_{3} = P_{1224} = -P_{1242} = \frac12P_{1343} =P_{1411} =P_{1422} \\
\phantom{\frac12\lm_{2}\lm_{3} }
=P_{1433}=P_{1444} =\frac12P_{2412} =-P_{3413} =P_{3431}.
\end{array}
\end{array}
\end{equation}
\end{subequations}
The rest components are determined by the property $P_{ijk}=-P_{jik}$. Let us remark that $P$ is not a curvature-like tensor.

Obviously, $P=0$ if and only if the corresponding Lie algebra is Abelian and $(L,J,g)$ is a K\"ahler-Norden manifold.

Using \eqref{Phi}, \eqref{Jdim4}, \eqref{g}, \eqref{nabla3},    we get the components $\Phi_{ijk}=\Phi(X_i,X_j,X_k)$ of $\Phi$ and  well as $f_{k}=f(X_k)$ and $f^*_{k}=f^*(X_k)$ of its associated 1-forms. The nonzero of them are the following and the rest are obtained by the property $\Phi_{ijk}=\Phi_{jik}$:
\begin{equation}\label{Phi-ex}
\begin{array}{l}
-\lm_1=-\Phi_{114} = -\Phi_{224} =\Phi_{334} =\Phi_{444}
=\Phi_{132} =\Phi_{242} =\frac14 f_4=-\frac14 f^*_2,
\\
-\lm_2=-\Phi_{113} = -\Phi_{223} =\Phi_{333} =\Phi_{443}
=\Phi_{131} =\Phi_{241} =-\frac14 f_3=\frac14 f^*_1,
\\
-\lm_3=\Phi_{112} = \Phi_{222} =-\Phi_{332} =-\Phi_{442}
=\Phi_{134} =\Phi_{244} =\frac14 f_2=\frac14 f^*_4,
\\
-\lm_4=\Phi_{111} = \Phi_{221} =-\Phi_{331} =-\Phi_{441}
=\Phi_{133} =\Phi_{243} =\frac14 f_1=\frac14 f^*_3.
\end{array}
\end{equation}

The Nijenhuis tensor vanishes on $(L,J,g)$ and $(L,J,\tg)$ as on any $\W_1$-manifold.
According to \cite{GaGrMi}, $N=0$ is equivalent to $\Phi(X_i,X_j)=-\Phi(JX_i,JX_j)$. Then, by means of \eqref{wNPhi} we obtain for the components of the associated Nijenhuis tensor  $S_{ijk}=4\Phi_{ijk}$, where the components of $\Phi$ are given in \eqref{Phi-ex}.

Bearing in mind \eqref{hn=nP}, \eqref{nabla3} and \eqref{Phi-ex}, we get the components of the invariant connection as follows
\begin{equation}\label{hn-ex}
\begin{array}{l}
D_{X_{1}}X_{1} = D_{X_{2}}X_{2} =D_{X_{3}}X_{3} = D_{X_{4}}X_{4} \\
\phantom{D_{X_{1}}X_{1} }=
 -\frac12(\lm_{4}X_{1}+\lm_{3}X_{2}-\lm_{2}X_{3}-\lm_{1}X_{4}),\\
D_{X_{1}}X_{3}=-D_{X_{2}}X_{4}=-D_{X_{3}}X_{1}= D_{X_{4}}X_{2} \\
\phantom{D_{X_{1}}X_{3} }=
 \frac12(\lm_{2}X_{1}-\lm_{1}X_{2}+\lm_{4}X_{3}-\lm_{3}X_{4}),
\\
D_{X_{1}}X_{4} = -D_{X_{3}}X_{2} =
\lm_{1}X_{1} +\lm_{3}X_{3},
\\
D_{X_{2}}X_{3} =-D_{X_{4}}X_{1} =
\lm_{2}X_{2} +\lm_{4}X_{4}.
\end{array}
\end{equation}

After that we compute the basic components $K_{ijk}=K(X_i,X_j)X_k$ of the invariant tensor $K$ under the twin in\-ter\-change, using \eqref{wRbRPhi}, \eqref{bR3} and \eqref{Phi-ex}. In other way, $K_{ijk}$ can be computed directly from \eqref{hn-ex} as the curvature tensor of $D$. We obtain for the components $K_{ijkl}=g(K(X_i,X_j)X_k,X_l)$ the following:
\begin{subequations}
\begin{equation*} \label{wR3}
\begin{array}{l}
\begin{array}{l}
\lm_{1}^{2} = K_{2424},\quad
\lm_{2}^{2} = K_{1313},\quad
\lm_{3}^{2} = K_{2442}, \quad
\lm_{4}^{2} = K_{1331},
\end{array}\\
\begin{array}{l}
\frac12\lm_{1}\lm_{3} = -K_{1223} = K_{1241} =K_{1421} =K_{1443} =K_{2321}\\
\phantom{\frac12\lm_{1}\lm_{3} }
=K_{2343} =\frac12K_{2422} =\frac12K_{2444} =-K_{3423} =K_{3441},
\\
\frac12\lm_{2}\lm_{4} = K_{1214} = -K_{1232} =\frac12K_{1311} =\frac12K_{1333} =K_{1412}\\
\phantom{\frac12\lm_{2}\lm_{4} }
=K_{1434} =K_{2312} =K_{2334} =K_{3414} =-K_{3432},\\
\lm_{1}\lm_{2}+\lm_{3}\lm_{4} = K_{1314} = K_{1332} =K_{2423} =K_{2441},\\
\frac14(\lm_{1}\lm_{2}-\lm_{3}\lm_{4}) = K_{1211} = -K_{1222} =K_{1424} =-K_{1431}\\
\phantom{\frac14(\lm_{1}\lm_{2}-\lm_{3}\lm_{4})}
=K_{2313} =-K_{2342} =K_{3433} =-K_{3444},
\\
\frac14(\lm_{1}\lm_{4}+\lm_{2}\lm_{3}) = -K_{1213} = K_{1224} =K_{1422} =K_{1433}\\
\phantom{\frac14(\lm_{1}\lm_{4}+\lm_{2}\lm_{3})}
=K_{2311} =K_{2344} =K_{3431} =-K_{3442},
\\
\frac14(\lm_{1}\lm_{2}+3\lm_{3}\lm_{4}) = -K_{1233} = K_{1244} =K_{1442} =K_{2331},\\
\frac14(3\lm_{1}\lm_{2}+\lm_{3}\lm_{4}) = K_{1413} = K_{2324} =K_{3411} =-K_{3422},\\
\frac14(\lm_{1}\lm_{4}-3\lm_{2}\lm_{3}) = K_{1242} = -K_{1411} = -K_{1444} =K_{3413},\\
\frac14(3\lm_{1}\lm_{4}-\lm_{2}\lm_{3}) = K_{1231} = K_{2322} =K_{2333} =K_{3424},
\end{array}
\end{array}
\end{equation*}
\begin{equation*} 
\begin{array}{l}
\begin{array}{l}
\end{array}
\\
\begin{array}{ll}
\frac14(\lm_{1}^2-2\lm_{2}^2+\lm_{3}^2) = K_{1432} = K_{3412}, &
\lm_{1}\lm_{2} = -K_{1341} = -K_{2432},\\
\frac14(\lm_{2}^2-2\lm_{3}^2+\lm_{4}^2) = K_{1243} = K_{1423}, &
\lm_{3}\lm_{4} = -K_{1323} = -K_{2414}, \\
\frac14(2\lm_{1}^2-\lm_{2}^2-\lm_{4}^2) = -K_{2341} = K_{3421}, &
\lm_{1}\lm_{4} = K_{1321} = K_{2434},\\
\frac14(\lm_{1}^2+\lm_{3}^2-2\lm_{4}^2) = -K_{1234} = K_{2314}, &
\lm_{2}\lm_{3} = K_{1343} = K_{2412},
\end{array}
\\
\begin{array}{ll}
\frac14(\lm_{1}^2+2\lm_{2}^2+\lm_{3}^2) = -K_{1212},\quad &
\frac14(2\lm_{1}^2+\lm_{2}^2+\lm_{4}^2) = K_{1221},\\
\frac14(\lm_{1}^2+\lm_{3}^2+2\lm_{4}^2) = K_{3434},\quad &
\frac14(\lm_{2}^2+2\lm_{3}^2+\lm_{4}^2) = -K_{3443},\\
\frac14(3\lm_{1}^2+3\lm_{3}^2-2\lm_{4}^2) = K_{1414},\quad &
\frac14(2\lm_{1}^2-3\lm_{2}^2-3\lm_{4}^2) = -K_{1441},\\
\frac14(3\lm_{2}^2-2\lm_{3}^2+3\lm_{4}^2) = K_{2323},\quad &
\frac14(3\lm_{1}^2-2\lm_{2}^2+3\lm_{3}^2) = K_{2332}.
\end{array}
\end{array}
\end{equation*}
\end{subequations}
The rest components are determined by property $K_{ijk}=-K_{jik}$. Let us remark that $K$ is not a curvature-like tensor.

Obviously, $K=0$ if and only if the corresponding Lie algebra is Abelian and $(L,J,g)$ is a K\"ahler-Norden manifold.


\end{document}